\documentclass[12pt]{article}

\usepackage{amssymb,amsmath,amsthm,mathrsfs,mathtools}
\usepackage[left=1in, right=1in]{geometry}
\usepackage{appendix}
\usepackage[round]{natbib}
\parskip10pt




\newcommand{\eps}{\varepsilon}
\newcommand{\supp}{\mathrm{supp}}
\def\R{{\mathbb R}}
\def\N{{\mathbb N}}
\def\Z{{\mathbb Z}}
\def\t{{\mathbb T}}
\def\T{{\mathbb T}}
\def\D{{\mathcal D}}

\def\e{{\rm e}}
\def\d{{\,\rm d}}
\def\ri{{\rm i}}

\def\w{{\rm w}}

\def\[{\left\lfloor}
\def\]{\right\rceil}
\def\<{\langle}
\def\>{\rangle}
\def\:{{\colon}}
\def\S{{\mathcal S}}

\def\be#1{\begin{equation}\label{#1}}
\def\ee{\end{equation}}

\newtheorem{theorem}{Theorem}
\newtheorem{lemma}[theorem]{Lemma}

\newtheorem{corollary}[theorem]{Corollary}

\newtheorem{definition}[theorem]{Definition}

\begin{document}
\title{Energy conservation in the 3D Euler equations on $\T^2\times\R_+$}
\author{James C.\ Robinson \and Jos\'e L.\ Rodrigo \and Jack W.D.\ Skipper}
\maketitle

\begin{abstract}
The aim of this paper is to prove energy conservation for the incompressible Euler equations in a domain with boundary. We work in the domain $\T^2\times\R_+$, where the boundary is both flat and has finite measure.

However, first we study the equations on domains without boundary (the whole space $\R^3$, the torus $\mathbb{T}^3$, and the hybrid space $\T^2\times\R$). We make use of some of the arguments of Duchon \& Robert ({\it Nonlinearity} {\bf 13} (2000) 249--255)
to prove energy conservation under the
assumption that $u\in L^3(0,T;L^3(\R^3))$ and one of the two integral conditions
\begin{equation*}
\lim_{|y|\to 0}\frac{1}{|y|}\int^T_0\int_{\R^3} |u(x+y)-u(x)|^3\d x\d t=0
\end{equation*}
or
\begin{equation*}
  \int_0^T\int_{\R^3}\int_{\R^3}\frac{|u(x)-u(y)|^3}{|x-y|^{4+\delta}}\,\d x\,\d y<\infty,\qquad\delta>0,
\end{equation*}
the second of which is equivalent to requiring $u\in L^3(0,T;W^{\alpha,3}(\R^3))$ for some $\alpha>1/3$.

We then use the first of these two conditions to prove energy conservation for a weak solution $u$ on $D_+:=\t^2\times \R_+$: we extend $u$ a solution defined on the whole of $\T^2\times\R$ and then use the condition on this domain to prove energy conservation for a weak solution $u\in L^3(0,T;L^3(D_+))$ that satisfies 
\begin{equation*}
\lim_{|y|\to 0} \frac{1}{|y|}\int^{T}_{0}\iint_{\t^2}\int^\infty_{|y|}|u(t,x+y)-u(t,x)|^3\d x_3\d x_1\d x_2\d t=0,
\end{equation*}
and certain continuity conditions near the boundary $\partial D_+=\{x_3=0\}$.
\end{abstract}

\section{Introduction}

Energy conservation  for  solutions of the incompressible  Euler equations
$$
\partial_tu+(u\cdot\nabla)u+\nabla p=0\qquad\nabla\cdot u=0
$$
 has long  been a topic of interest. While for sufficiently smooth solutions $u$ a standard integration-by-parts argument shows that energy is conserved ($\|u(t)\|_{L^2}=\|u(0)\|_{L^2}$ for every $t\ge0$) for weak solutions $u\in L^\infty(0,T;L^2)\cap L^3(0,T;L^3)$ we do not have the regularity needed to perform these operations. \cite{onsager1949statistical} conjectured that weak solutions to the Euler equations satisfying a H\"older continuity condition of order greater than one third should conserve energy.

The study of energy conservation for this system has so far been carried out on domains without boundary, either the whole space $\R^3$ or the torus $\mathbb{T}^3$. In this paper we aim to treat the question on the domain $\T^2\times\R_+$, which involves a flat boundary with finite measure.

The first proof of energy conservation for weak solutions was given by \cite{eyink1994energy} on the torus, assuming that the solution satisfies $u(\cdot,t)\in C^\alpha_\star$ for $\alpha>1/3$ with a uniform bound for $t\in [0,T]$. A definition of the space $C^\alpha_\star$ equivalent to that of Eyink's is as follows: expand $u$ as the Fourier series
$$
u=\sum_{k\in\Z^3}\hat u_k\e^{\ri k\cdot x},
$$
imposing conditions to ensure that $u$ is real ($\hat u_k=\overline{\hat u_{-k}}$) and is divergence free ($k\cdot\hat u_k=0$); then $u\in C^\alpha_\star(\T^3)$ if
$$
\sum_{k\in\Z^3}|k|^\alpha|\hat u_k|<\infty.
$$
Requiring $u\in C^\alpha_\star$ with $\alpha>1/3$ is a stronger condition than the one-third H\"older continuity conjectured by Onsager.

Subsequently \cite{constantin1994onsager}
gave a short proof of energy conservation, in the framework of Besov spaces (but still on the torus), under the weaker assumption that
\begin{equation}\label{CET}
u\in L^3(0,T;B^\alpha_{3,\infty})\qquad\mbox{with}\qquad\alpha>1/3.
\end{equation}
As $C^\alpha\subset B^\alpha_{3,\infty}$ this proves Onsager's Conjecture. Here $B^s_{p,r}$ denotes a Besov space as defined in \cite{bahouri2011fourier} and \cite{lemarie2002recent}. 

\cite{duchon2000inertial} showed that solutions satisfying a weaker regularity condition still conserve energy. They derived a local energy equation that contains a term $D(u)$ representing the dissipation or production of energy caused by the lack of smoothness of $u$; this term can be seen as a local version of Onsager's original statistically averaged description of energy dissipation. They showed 
that if $u$ satisfies
\begin{equation}\label{RDcondition}
 \int |u(t,x+\xi)-u(t,x)|^3 \d x\le C(t)|\xi|\sigma(|\xi|),
\end{equation}
where $\sigma(a)\to0$ as $a\to 0$ and $C\in L^1(0,T)$, then
$\|D(u)\|_{L^1(0,T,L^1(\T^3))}=0$ and hence the kinetic energy is conserved. The condition in \eqref{RDcondition} is weaker than \eqref{CET}. A detailed review examining this and further work relating to Onsager's conjecture is given by \cite{eyink2006onsager}.

More recently energy conservation was shown by \cite{cheskidov2008energy} when $u$ lies in the space $L^3(0,T;B^{1/3}_{3,c(\N)})$, where $B^{1/3}_{3,c(\N)}$ is a subspace of $B^{1/3}_{3,\infty}$. 
In fact \cite{cheskidov2008energy} showed that energy conservation holds for solutions satisfying the still weaker condition
\begin{equation*}
\lim_{q\to\infty}\int^T_0 2^q\|\Delta_q u\|^3_{L^3}\d t=0,
\end{equation*}
where $\Delta_q$ performs a smooth restriction of $u$ into Fourier modes of order $2^q$.
In a follow-up paper \cite{RS09} (see also \citealp{shvydkoy2010lectures}) states that this condition is equivalent to
\begin{equation}\label{Scondition}
\lim_{|y|\to 0}\frac{1}{|y|}\int^T_0\int |u(x+y)-u(x)|^3\d x\d t=0,
\end{equation}
and proves a local energy balance under this condition. We observe that condition \eqref{RDcondition} has similar form to \eqref{Scondition}, yet explicitly separates the limit and the integrability in time. This makes \eqref{Scondition} less restrictive.

In this paper we use an approach similar to that of \cite{RS09}, but rather than basing our argument on the approach of  \cite{constantin1994onsager} we adopt some of the ideas from \cite{duchon2000inertial} and give a direct proof that energy conservation follows on the whole domain (this simplifies matters since the pressure no longer plays a role) under the condition that
$$
\int_{\R^3}\int_{\R^3}\nabla\varphi_\eps (\xi)\cdot (v(x+\xi)-
 v(x))|v(x+\xi)-v(x)|^2\d \xi \d x\to0
 $$
 as $\eps\to0$, where $\varphi$ is an even mollifier.

 Given this condition it is relatively simple to show energy conservation under the assumption \eqref{Scondition}, which we do in Theorem \ref{conservation}, and under the alternative condition
 \begin{equation}\label{newone}
  \int_0^T\int_{\R^3}\int_{\R^3}\frac{|u(x)-u(y)|^3}{|x-y|^{4+\delta}}\d x\d y<\infty,\qquad\delta>0,
 \end{equation}
 which is equivalent to requiring $u\in L^3(0,T;W^{\alpha,3}(\R^3))$ for some $\alpha>1/3$ (Theorem \ref{X2}).

   For the most significant new contribution of this paper we use condition (\ref{Scondition}) to analyse energy conservation in the domain $D_+:=\T^2\times\R_+$. We show that if $(u,p)$ is a weak solution on $D_+$ then $(u_R,p)$ is a weak solution on $D_-$, where $u_R$ is an appropriately `reflected' version of $u$, and that $u+u_R$ is a weak solution on $D:=\T^2\times\R$. It follows that energy is conserved for $u_E$ under condition \eqref{Scondition}; from here we deduce energy conservation for $u$ under the condition
\begin{equation*}
\lim_{|y|\to 0} \frac{1}{|y|}\int^{t_2}_{t_1}\iint_{\t^2}\int^\infty_{|y|}|u(t,x+y)-u(t,x)|^3\d x_3\d x_1\d x_2\d t=0,
\end{equation*}
   and additional assumptions near $\partial D_+$: we assume that $u$ is continuous at $\t^2\times \{0\}$, for almost every $t$ and $u\in L^3(0,T;L^\infty(\t^2 \times [0,\delta))$ for some $\delta>0$, see Theorem \ref{MainD+}.

\section{Energy conservation without boundaries}\label{section3}

In this first section we treat the incompressible Euler equations on a domain without boundaries: $\R^3$, $\T^3$, or one of the hybrid domains $\T\times\R^2$ or $\T^2\times\R$. We write $D$ in what follows to denote any one these domains, being careful to highlight any differences required in the definitions/arguments required to deal with the periodic or hybrid cases.

\subsection{Weak solutions of the Euler equations}\label{sec:WSE1}

For vector-valued functions $f,g$ and matrix-valued functions $F,G$ we use the notation
$$
 \langle f,g \rangle=\int_{D}f_i(x)g_i(x) \d x \quad \mathrm{and}  \quad \langle F:G \rangle=\int_{D}F_{ij}(x)G_{ij}(x) \d x,
$$
employing Einstein's summation convention (sum over repeated indices).

We use the notation $\D(D)$ to denote the collection of $C^\infty$ functions with compact support in $D$, and $\S(D)$ for the collection of all $C^\infty$ functions with Schwartz-like decay in the unbounded directions of $D$, e.g.\ for $\T^2\times\R$ we require
\begin{equation*}
 \sup_{x\in\T^2\times\R}|\partial^\alpha \phi||x_3|^k \le \infty,
\end{equation*}
for all $\alpha,k\ge 0$ where $\alpha$ is a multi-index over all the spatial variables $(x_1,x_2,x_3)$. Note that in periodic directions the requirement of `compact support' is trivially satisfied. The spaces $\D_\sigma(D)$ and $\S_\sigma(D)$ consist of all divergence-free elements of the $\D(D)$ or $\S(D)$.

We denote by $H_\sigma(D)$ the closure of $\D_\sigma(D)$ in the norm of $L^2(D)$; this coincides with the closure of $\S_\sigma(D)$ in the same norm.

Elements of $H_\sigma(D)$ are divergence free in the sense of distributions, i.e.
 \begin{equation}\label{incomp}
 \langle u,\nabla \phi \rangle=0\qquad\mbox{for all}\quad\phi \in {\mathcal D}(D);
 \end{equation}
 but in fact this equality holds for all $\phi\in\S(D)$, and even for all $\phi\in H^1(D)$: indeed, since $\S_\sigma(D)$ is dense in $H_\sigma(D)$, for any $u\in H_\sigma(D)$ we can find $(u_n)\in\S_\sigma(D)$ such that $u_n\to u$ in $H^1(D)$, and then for any $\phi\in H^1(D)$ we have
 $$
 \<u,\nabla\phi\>=\lim_{n\to\infty}\<u_n,\nabla\phi\>=\lim_{n\to\infty}\<\nabla\cdot u_n,\phi\>=0
 $$
 (cf.\ Lemma 2.11 in Robinson et al., 2016, for example).

 In a slight abuse of notation we denote by $C_\w([0,T];H_\sigma)$ the collection of all functions $u\:[0,T]\to H_\sigma(D)$ that are
weakly continuous into $L^2$, i.e.
\begin{equation}\label{whatCwis}
 t\mapsto \langle u(t), \phi \rangle
\end{equation}
is continuous for every $\phi\in L^2(D)$. Note that $C_\w([0,T];H_\sigma)\subset L^\infty(0,T;H_\sigma)$.

We take as our space-time test functions the elements of
$$
{\mathcal S}_\sigma^T:=\{\psi\in C^\infty(D\times[0,T]):\ \psi(\cdot,t)\in\S_\sigma(D) \mbox{ for all } t\in [0,T] \}.
$$
We choose these functions to take values in $\S_\sigma$ (rather than in $\D_\sigma$) since the property of compact support is not preserved by the Helmholtz decomoposition, whereas such a decomposition respects Schwartz-like decay.

\begin{lemma}\label{HD21}
  Any $\psi \in \mathcal S$ can be decomposed as $\psi=\phi +\nabla \chi $, where $\phi\in\S_\sigma$ and $\chi\in\S$,  and 
\begin{equation}\label{HDSob}
\|\psi\|_{H^s}+\|\nabla\chi\|_{H^s}\le C_s\|\psi\|_{H^s}
\end{equation}
for each $s\ge0$.
\end{lemma}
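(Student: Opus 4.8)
The plan is to obtain the Helmholtz decomposition of $\psi\in\S$ via the Fourier transform (or Fourier series in the periodic directions), which is the natural tool here since Schwartz-like decay and $H^s$ norms are both transparent on the Fourier side. Writing $\psi=\phi+\nabla\chi$ with $\phi$ divergence-free amounts, after taking transforms, to the orthogonal splitting $\hat\psi(\xi)=P_\xi\hat\psi(\xi)+(I-P_\xi)\hat\psi(\xi)$ in each frequency fibre, where $P_\xi$ is the projection onto the orthogonal complement of $\xi$; explicitly $\widehat{\nabla\chi}(\xi)=\ri\xi\,\hat\chi(\xi)$ with $\hat\chi(\xi)=-\ri\,\xi\cdot\hat\psi(\xi)/|\xi|^2$, and $\hat\phi(\xi)=\hat\psi(\xi)-\xi(\xi\cdot\hat\psi(\xi))/|\xi|^2$. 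In the hybrid setting $\T^2\times\R$ one runs the same computation with $\xi=(k_1,k_2,\eta)\in\Z^2\times\R$, replacing $\d\xi$ by $\sum_{k}\int\d\eta$.

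First I would record that $\phi$ is genuinely divergence-free and that $\nabla\cdot\nabla\chi=\nabla\cdot\psi$, which is immediate from the fibrewise definitions; and that $\phi,\chi$ are real whenever $\psi$ is, since the multipliers $\xi\otimes\xi/|\xi|^2$ and $\ri\xi/|\xi|^2$ have the right conjugation symmetry. Next I would verify the Schwartz decay. The only subtlety is the singularity of the symbols at $\xi=0$: the multiplier $m(\xi)=\xi\otimes\xi/|\xi|^2$ defining $\phi$ is bounded and, away from $0$, smooth with derivatives decaying like $|\xi|^{-|\alpha|}$, so $\hat\phi$ inherits rapid decay from $\hat\psi$ on $|\xi|\ge1$; near $\xi=0$ one uses that $\hat\psi$ is smooth there and that $m$, though not continuous at the origin, is bounded, while for $\chi$ one needs $\xi\cdot\hat\psi(\xi)=\widehat{\nabla\cdot\psi}(\xi)/\ri$ to vanish to high order at $\xi=0$ — which it does, because $\nabla\cdot\psi\in\S$ so its transform is smooth and in particular $\xi\cdot\hat\psi(\xi)=O(|\xi|)$, making $\hat\chi(\xi)=-\ri\,\xi\cdot\hat\psi(\xi)/|\xi|^2$ bounded near $0$; higher derivatives are controlled similarly since each derivative of $\xi\cdot\hat\psi$ gains a power. (In the periodic-direction variables there is no issue at all since $k\ne0$ is discrete; the $\xi=0$ analysis only bites in the genuinely continuous directions, and in the hybrid case $\T^2\times\R$ only the one-dimensional frequency $\eta$ can approach $0$ with $k=0$.) Transforming back, this gives $\phi\in\S_\sigma$ and $\chi\in\S$.

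Finally, the norm bound \eqref{HDSob} is the cheap part: on the Fourier side $\|\phi\|_{H^s}^2=\int(1+|\xi|^2)^s|\hat\phi(\xi)|^2\,\d\xi$ and pointwise $|\hat\phi(\xi)|\le|\hat\psi(\xi)|$ because $m(\xi)$ is an orthogonal projection, so $\|\phi\|_{H^s}\le\|\psi\|_{H^s}$; likewise $|\widehat{\nabla\chi}(\xi)|=|(I-m(\xi))\hat\psi(\xi)|\le|\hat\psi(\xi)|$ gives $\|\nabla\chi\|_{H^s}\le\|\psi\|_{H^s}$, so one may even take $C_s=2$ (indeed the two pieces are orthogonal in every $H^s$, giving $C_s=1$ if one states it as $\|\phi\|_{H^s}^2+\|\nabla\chi\|_{H^s}^2=\|\psi\|_{H^s}^2$). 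The main obstacle — and the only place any real care is needed — is the behaviour of the Helmholtz multipliers at the origin in the unbounded directions, i.e.\ checking that the division by $|\xi|^2$ does not destroy smoothness of $\hat\phi,\hat\chi$ there; this is exactly where one must exploit that $\psi$ (hence $\nabla\cdot\psi$) is Schwartz, so that $\xi\cdot\hat\psi(\xi)$ vanishes to sufficiently high order at $\xi=0$ to cancel the pole.
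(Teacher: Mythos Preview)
Your approach is precisely the one the paper takes: define $\phi$ and $\chi$ via the Fourier-side Leray projector $I-k\otimes k/|k|^2$ in the appropriate hybrid Fourier representation, then check the required properties. The paper in fact gives no detail beyond writing these formulas down and asserting that ``it is easy to check that these functions have the stated properties''; your discussion of the low-frequency behaviour and of the $H^s$ bound goes well beyond what the paper supplies, and your argument for \eqref{HDSob} via the pointwise bound $|\hat\phi|,|\widehat{\nabla\chi}|\le|\hat\psi|$ is correct.

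There is, however, a slip in your treatment of the origin. From $\xi\cdot\hat\psi(\xi)=O(|\xi|)$ you conclude that $\hat\chi(\xi)=-\ri\,\xi\cdot\hat\psi(\xi)/|\xi|^2$ is bounded near $0$; but $O(|\xi|)/|\xi|^2=O(|\xi|^{-1})$. Boundedness of $\hat\chi$ would require second-order vanishing, i.e.\ $\hat\psi(0)=\int\psi=0$, which is not assumed. The same obstruction hits $\phi$ on $\R^3$ (and on $\T\times\R^2$): the matrix $\xi\otimes\xi/|\xi|^2$ has a direction-dependent limit as $\xi\to0$, so $\hat\phi$ is discontinuous there unless $\hat\psi(0)=0$; noting that the multiplier is bounded is not enough to place $\phi$ in $\S$. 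In the particular case $D=\T^2\times\R$ the $\phi$ part is in fact fine --- on the only problematic slice $(k_1,k_2)=(0,0)$ the projector equals the constant matrix $e_3\otimes e_3$, so $\hat\phi(0,0,k_3)=(\hat\psi_1,\hat\psi_2,0)$ is smooth --- but $\hat\chi(0,0,k_3)=-\ri\,\hat\psi_3(0,0,k_3)/k_3$ is still singular unless $\int_D\psi_3=0$. What does survive in every case is that $\widehat{\nabla\chi}=(\xi\otimes\xi/|\xi|^2)\hat\psi$ is pointwise dominated by $|\hat\psi|$, which gives the $H^s$ estimate you state; and this, together with $\phi\in\S_\sigma$, is all that the application in Lemma~\ref{uLipcitz} actually uses. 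So the decomposition and the bound \eqref{HDSob} stand as you argue, but the assertion $\chi\in\S$ needs more care than either you or the paper provide.
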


\begin{proof} (Cf.\ Theorem 2.6 and Exercise 5.2 in \citealp{RRS}.)
  Since $\psi\in\mathcal S$ we can write $\psi$ as a hybrid Fourier series/inverse Fourier transform, using Fourier series in the periodic directions and the Fourier transform in the unbounded directions. For example, in the case $D=\T^2\times\R$ we have
  $$
  \psi(x)=\int_{-\infty}^\infty\sum_{(k_1,k_2)\in\Z^2}\hat u(k)\e^{\ri k\cdot x}\,\d k_3,
  $$
  and we can set
  $$
  \phi(x)=\int_{-\infty}^\infty\sum_{(k_1,k_2)\in\Z^2}\left(I-\frac{k\otimes k}{|k|^2}\right)\hat u(k)\e^{\ri k\cdot x}\,\d k_3,
  $$
  and
  $$
  \chi(x)=\int_{-\infty}^\infty\sum_{(k_1,k_2)\in\Z^2}\frac{k\cdot\hat u(k)}{|k|^2}\e^{\ri k\cdot x}\,\d k_3;
  $$
  in the fully periodic case we omit the $k\otimes k/|k|^2$ term when $k=0$.   It is easy to check that these functions have the stated properties.
\end{proof}

Assuming that $u$ is a smooth solution of the Euler equations
$$
\partial_tu+(u\cdot\nabla)u+\nabla p=0\qquad\nabla\cdot u=0
$$
if we multiply by an element of $\S_\sigma^T$ and integrate by parts in space and time then we obtain (\ref{Weaksolution}) below; the pressure term vanishes since there are no boundaries and $\psi$ is divergence free. Requiring only (\ref{Weaksolution}) to hold we obtain our definition of a weak solution.

\begin{definition}[Weak Solution]\label{def:weakonU3}
We say that $u\in C_\w([0,T];H_\sigma)$ is a weak solution of the Euler equations on $[0,T]$, arising from the initial condition $u(0)\in H_\sigma$, if
\begin{equation*}
 \langle u(t),\psi(t)\rangle-\langle u(0),\psi(0)\rangle-\int^t_0\langle u(\tau),\partial_t\psi(\tau)\rangle \d \tau
 =\int^t_0\langle u(\tau)\otimes u(\tau):\nabla\psi(\tau)\rangle\d \tau\label{Weaksolution}
\end{equation*}
for every $t\in[0,T]$ and any $\psi\in \mathcal{S}_\sigma^T$.
\end{definition}

We note here that replacing $\S_\sigma^T$ by $\D_\sigma^T$ leads to an equivalent definition (via a simpler version of the argument of Lemma \ref{generalpsi}, below).

Throughout the paper we let $\varphi$ be an even scalar function in $C_c^\infty(B(0,1))$  with $\int_{\R^3} \varphi=1$ and for any $\eps>0$ we set $\varphi_\eps(x)=\eps^{-3}\varphi(x/\eps)$. Then for any function $f$ we define the mollification of $f$ as $f_\eps:= \varphi_\eps \star f $ where $\star$ denotes convolution. Thus
$$
 f_\eps(x)= \varphi_\eps \star f(x):=\int_{\R^3}\varphi_\eps(x-y)f(y)\d y= \int_{B(0,\eps)}\varphi_\eps(y)f(x-y)\d y.
$$
In the periodic directions we extend $f$ by periodicity in this integration. 
We insist that $\varphi$ is even since this ensures that the operation of mollification satisfies the `symmetry property'
 \begin{equation}\label{symmetry}
 \langle \varphi_\eps \star u, v\rangle=\langle  u,\varphi_\eps \star v\rangle.
 \end{equation}

Our aim in the next section is to show the validity of the following two equalities that follow from the definition of a weak solution in \eqref{Weaksolution}. The first is
\begin{equation}\label{uepstestfunction}
 \langle u(t),u_\eps(t)\rangle-\langle u(0),u_\eps(0)\rangle-\int^t_0\langle u(\tau),\partial_t u_\eps(\tau)\rangle \d \tau=\int^t_0\langle u(\tau)\otimes u(\tau):\nabla u_\eps(\tau)\rangle\d \tau;
\end{equation}
this amounts to using $u_\eps$, a mollification of the solution $u$, as a test function in \eqref{Weaksolution}: we need to show that there is sufficient time regularity to do this, which we do in Section \ref{WSM}. The second is
\begin{equation}\label{mollfiedeqn}
 \int^t_0\langle\partial_t u_\eps(\tau), u(\tau)\rangle \d \tau=-\int^t_0\langle \nabla \cdot [u(\tau)\otimes u(\tau)]_\eps,u(\tau)\rangle\d \tau.
\end{equation}
One could see this heuristically as a ``mollification of the  equation'' tested with $u$; we will show that this can be done in a rigorous way in Section \ref{MoE}.
We can then add these equations and take the limit as $\eps \to 0$ to obtain the equation for conservation (or otherwise) of energy (Section \ref{Econ}).

\subsection{Using $u_\eps$ as a test function}\label{WSM}

We will show that if $u$ is a weak solution then in fact \eqref{Weaksolution} holds for a larger class of test functions with less time regularity. We denote by $C^{0,1}([0,T];H_\sigma)$ the space of Lipschitz functions from $[0,T]$ into $H_\sigma$.

\begin{lemma}\label{generalpsi}
 If $u$ is a weak solution of the Euler equations in the sense of Definition \ref{def:weakonU3} then \eqref{Weaksolution} holds for every $\psi\in\mathcal{L}_\sigma$, where
 \begin{equation*}
  \mathcal{L}_\sigma:=L^1(0,T;H^3)\cap C^{0,1}([0,T];H_\sigma).
 \end{equation*}
\end{lemma}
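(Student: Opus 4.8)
The plan is to prove this by a density argument: approximate an arbitrary $\psi\in\mathcal L_\sigma$ by a sequence $(\psi_n)\subset\mathcal S_\sigma^T$ in a topology strong enough to pass to the limit in every term of \eqref{Weaksolution}. Since $u\in C_\w([0,T];H_\sigma)\subset L^\infty(0,T;L^2)$ and $H^3(D)\hookrightarrow W^{1,\infty}(D)$ in three dimensions, the terms are controlled by $|\langle u(\tau),\phi(\tau)\rangle|\le\|u\|_{L^\infty(L^2)}\|\phi(\tau)\|_{L^2}$, $|\int_0^t\langle u,\partial_t\phi\rangle\d\tau|\le\|u\|_{L^\infty(L^2)}\|\partial_t\phi\|_{L^1(0,t;L^2)}$, and $|\int_0^t\langle u\otimes u:\grad\phi\rangle\d\tau|\le C\|u\|^2_{L^\infty(L^2)}\|\phi\|_{L^1(0,t;H^3)}$. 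So it suffices to produce $\psi_n\in\mathcal S_\sigma^T$ with $\psi_n\to\psi$ in $C([0,T];L^2)\cap L^1(0,T;H^3)$ and $\partial_t\psi_n\to\partial_t\psi$ in $L^1(0,T;L^2)$; note that, being Lipschitz into the Hilbert space $H_\sigma$, $\psi$ is differentiable almost everywhere with $\partial_t\psi\in L^\infty(0,T;L^2)$.

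To build such a sequence I would first extend $\psi$ from $[0,T]$ to $[-T,2T]$ by even reflection at the endpoints; the extension $\bar\psi$ still lies in $L^1(-T,2T;H^3)\cap C^{0,1}([-T,2T];L^2)$ and is absolutely continuous into $L^2$, so that $\partial_s\bar\psi$ is the corresponding reflection of $\partial_t\psi$ (reflection rather than constant extension avoids having to assume $\psi(0),\psi(T)\in H^3$). Then I would regularize in three stages: (i) mollify in time, $\psi^\delta:=\rho_\delta\star_t\bar\psi$, obtaining a function $C^\infty$ in $t$ into $H^3$ that is still divergence free (time-averaging commutes with $\grad\cdot$) and satisfies $\partial_t\psi^\delta=\rho_\delta\star_t\partial_s\bar\psi$; (ii) mollify in space, $\psi^{\delta,\eps}:=\varphi_\eps\star\psi^\delta$, now $C^\infty$ in $x$, still divergence free, still smooth in $t$; (iii) cut off in the unbounded directions with $\chi_R\in C^\infty$ equal to $1$ on the ball/slab of radius $R$, supported in radius $2R$, with $|\grad^k\chi_R|\lesssim R^{-k}$, setting $w^{\delta,\eps,R}:=\chi_R\,\psi^{\delta,\eps}$, which belongs to $\mathcal S(D)$ for each $t$. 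The last step destroys the divergence-free condition, which I would restore using Lemma \ref{HD21}: writing $w^{\delta,\eps,R}=\phi^{\delta,\eps,R}+\grad\chi^{\delta,\eps,R}$ with $\phi^{\delta,\eps,R}\in\mathcal S_\sigma$, set $\psi^{\delta,\eps,R}:=\phi^{\delta,\eps,R}$. Because the Leray projection $w\mapsto\phi$ is a Fourier multiplier it commutes with $\partial_t$ and is bounded on every $H^s$, so $\psi^{\delta,\eps,R}$ is $C^\infty$ in $t$ with $\psi^{\delta,\eps,R}(\cdot,t)\in\mathcal S_\sigma(D)$; that is, $\psi^{\delta,\eps,R}\in\mathcal S_\sigma^T$.

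Next I would verify the three convergences stage by stage. As $\delta\to0$: $\psi^\delta\to\psi$ uniformly in $C([0,T];L^2)$ (uniform continuity of $\bar\psi$ on a compact interval), $\psi^\delta\to\psi$ in $L^1(0,T;H^3)$, and $\partial_t\psi^\delta\to\partial_t\psi$ in $L^1(0,T;L^2)$ --- all standard. For fixed $\delta$, as $\eps\to0$: $\psi^{\delta,\eps}\to\psi^\delta$ in $C([0,T];L^2)$ (mollification is uniform on the $L^2$-compact set $\{\psi^\delta(t):t\in[0,T]\}$), and $\psi^{\delta,\eps}\to\psi^\delta$ in $L^1(0,T;H^3)$, $\partial_t\psi^{\delta,\eps}\to\partial_t\psi^\delta$ in $L^1(0,T;L^2)$ by dominated convergence in $t$ (the integrands go to zero pointwise in $t$ and are dominated by $\|\psi^\delta(t)\|_{H^3}$ and $\|\partial_t\psi^\delta(t)\|_{L^2}$ respectively, which lie in $L^1(0,T)$). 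For fixed $\delta,\eps$, as $R\to\infty$: using that $\psi^{\delta,\eps}$ is divergence free, $\psi^{\delta,\eps,R}-\psi^{\delta,\eps}=\mathbb P\bigl((\chi_R-1)\psi^{\delta,\eps}\bigr)$, and $(\chi_R-1)g\to0$ in $H^s$ for any fixed $g\in H^s$ (the top-order term by dominated convergence, the lower-order ones because $\grad^j\chi_R=O(R^{-j})$); combined with the boundedness of $\mathbb P$ on $H^s$ and the uniform tail-decay on $L^2$-compact sets this gives $\psi^{\delta,\eps,R}\to\psi^{\delta,\eps}$ in all three norms. A diagonal choice $\delta_n\to0$, then $\eps_n\to0$, then $R_n\to\infty$ (each fast enough) yields $\psi_n:=\psi^{\delta_n,\eps_n,R_n}\in\mathcal S_\sigma^T$ with the required convergences; writing \eqref{Weaksolution} for each $\psi_n$ and passing to the limit using the bounds of the first paragraph completes the proof.

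I expect the main obstacle to be bookkeeping rather than any single hard estimate: choosing a time extension compatible both with the $L^1(0,T;H^3)$ integrability and with the Lipschitz continuity into $L^2$; checking that the cut-off-and-project step produces a genuine element of $\mathcal S_\sigma^T$, for which Lemma \ref{HD21} is essential (a plain spatial mollification of an $H^3$ function is smooth but lacks Schwartz decay, and a bare cutoff is not divergence free); and merging the three one-parameter limits into a single diagonal sequence. Each of the individual convergences is a routine mollification or dominated-convergence argument.
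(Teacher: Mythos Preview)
Your proposal is correct and follows essentially the same approach as the paper: both argue by linearity and continuity of the functional $E(\psi)$ together with density of $\S_\sigma^T$ in $\mathcal L_\sigma$, and your term-by-term bounds in the first paragraph match the paper's almost verbatim. The only difference is that the paper simply asserts the density (in the norm $\|\psi\|_{L^1(0,T;H^3)}+\|\psi\|_{C^{0,1}([0,T];L^2)}$) without proof, whereas you supply an explicit approximation via time mollification, space mollification, cutoff, and Leray projection---and you work with the slightly weaker but sufficient convergence $\partial_t\psi_n\to\partial_t\psi$ in $L^1(0,T;L^2)$ rather than in the full $C^{0,1}$ seminorm.
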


\begin{proof}
 For a fixed $u$ we can write \eqref{Weaksolution} as $E(\psi)=0$ for every $\psi\in\S_\sigma^T$, where
 \begin{align*}
  E(\psi):=\langle u(t),\psi(t)\rangle-\langle u(0),\psi(0)\rangle-\int^t_0\langle u(\tau),&\partial_t\psi(\tau)\rangle \d \tau\\
  &-\int^t_0\langle u(\tau)\otimes u(\tau):\nabla\psi(\tau)\rangle\d \tau.
 \end{align*}
Since $E$ is linear in $\psi$, and  $\S_\sigma^T$ is dense in $\mathcal{L}_\sigma$ with respect to the norm
\begin{equation*}
 \|\psi\|_{L^1(0,T;H^3)}+\|\psi\|_{C^{0,1}([0,T];L^2)},
\end{equation*}
to complete the proof it suffices to show that $\psi\mapsto E(\psi)$ is bounded in this norm. We proceed term-by-term:
$$
 \left|\langle u(t),\psi(t)\rangle-\langle u(0),\psi(0)\rangle\right|\le 2\|u\|_{L^\infty(0,T;L^2)}\|\psi\|_{L^\infty(0,T;L^{2})},
 $$
 using the fact that $u\in C_{\rm w}([0,T];H_\sigma)$;
 \begin{align*}
 \left|\int^t_0\langle u(\tau),\partial_\tau\psi(\tau)\rangle \d \tau\right|&\le T\|u\|_{L^\infty(0,T;L^2)}\|\psi\|_{C^{0,1}([0,T];L^{2})};\qquad\mbox{and}\\
 \left|\int^t_0\langle u(\tau)\otimes u(\tau):\nabla\psi(\tau)\rangle\d \tau\right|&\le T\|u\|^2_{L^\infty(0,T;L^2)}\|\nabla\psi\|_{L^{1}(0,T;L^{\infty})}.
\end{align*}
It follows that
\begin{equation*}
|E(\psi)|\le C \|u\|_{L^\infty(0,T;L^2)}\|\psi\|_{C^{0,1}([0,T];L^{2})} +C\|u\|^2_{L^\infty(0,T;L^2)}\|\psi\|_{L^{1}(0,T;H^3)}
\end{equation*}
and so we obtain the desired result.
\end{proof}

We now study the time regularity of $u$ when paired with a sufficiently smooth function that is not necessarily divergence free.

\begin{lemma}\label{uLipcitz}
 If $u$ is a weak solution then
 \begin{equation}\label{Lipschitzintime}
  |\langle u(t)-u(s),\phi\rangle|\le C|t-s|\quad\mbox{for all}\quad \phi\in H^3(\R^3),
 \end{equation}
where $C$ depends only on $\|u\|_{L^\infty(0,T;L^2)}$ and $\|\phi\|_{H^3}$.
\end{lemma}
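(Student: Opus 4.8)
The plan is to reduce a general test function $\phi\in H^3$ to a divergence-free one, use that divergence-free field as a \emph{time-independent} test function in the weak formulation of Definition~\ref{def:weakonU3}, and then exploit the fact that such a test function contributes no $\partial_t\psi$ term.

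First I would assume $\phi\in\S$ and apply Lemma~\ref{HD21} to write $\phi=\psi+\nabla\chi$ with $\psi\in\S_\sigma$, $\chi\in\S$, and $\|\psi\|_{H^3}\le C\|\phi\|_{H^3}$ by \eqref{HDSob}. Since $u(t),u(s)\in H_\sigma$ and $\chi\in H^1$, the orthogonality recalled just after \eqref{incomp} gives $\<u(t),\nabla\chi\>=\<u(s),\nabla\chi\>=0$, so $\<u(t)-u(s),\phi\>=\<u(t)-u(s),\psi\>$ and it suffices to estimate the latter. A general $\phi\in H^3$ is then handled by density of $\S$ in $H^3$: choosing $\phi_n\in\S$ with $\phi_n\to\phi$ in $H^3$, we have $\<u(t)-u(s),\phi_n\>\to\<u(t)-u(s),\phi\>$ (as $u(t)-u(s)\in L^2$) and $\|\phi_n\|_{H^3}\to\|\phi\|_{H^3}$, so the bound passes to the limit.

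Next, for fixed $\psi\in\S_\sigma$ I would regard $\psi$ as the constant-in-time map $\tau\mapsto\psi$. This lies in $\mathcal L_\sigma=L^1(0,T;H^3)\cap C^{0,1}([0,T];H_\sigma)$ (a constant is certainly Lipschitz in time and integrable into $H^3$), so Lemma~\ref{generalpsi} lets us insert it into \eqref{Weaksolution} at every time. Since $\partial_\tau\psi\equiv0$, evaluating the resulting identity at $t$ and at $s$ and subtracting gives
\begin{equation*}
 \<u(t)-u(s),\psi\>=\int_s^t\<u(\tau)\otimes u(\tau):\nabla\psi\>\d\tau .
\end{equation*}
Bounding the integrand pointwise by $\|\nabla\psi\|_{L^\infty}|u(\tau)|^2$ we obtain
\begin{equation*}
 |\<u(t)-u(s),\phi\>|=|\<u(t)-u(s),\psi\>|\le |t-s|\,\|u\|_{L^\infty(0,T;L^2)}^2\,\|\nabla\psi\|_{L^\infty},
\end{equation*}
and the proof is finished by the Sobolev embedding $H^3\hookrightarrow W^{1,\infty}$ (valid in three dimensions) together with \eqref{HDSob}, which give $\|\nabla\psi\|_{L^\infty}\le C\|\psi\|_{H^3}\le C\|\phi\|_{H^3}$; hence $C$ depends only on $\|u\|_{L^\infty(0,T;L^2)}$ and $\|\phi\|_{H^3}$, as claimed in \eqref{Lipschitzintime}.

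I do not expect a serious obstacle here: the argument is essentially bookkeeping. The only points needing a little care are the admissibility of a time-independent element of $H^3\cap H_\sigma$ as a test function in Lemma~\ref{generalpsi} (immediate) and the behaviour of the Helmholtz decomposition on the unbounded domains — but that is exactly Lemma~\ref{HD21}, with the $H^3$ case reduced to the Schwartz case by the density step above.
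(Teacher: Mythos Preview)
Your proposal is correct and follows essentially the same route as the paper: Helmholtz-decompose $\phi\in\S$ via Lemma~\ref{HD21}, drop the gradient part using incompressibility, insert the resulting time-independent divergence-free test function into the weak formulation, bound the nonlinear term via $H^3\hookrightarrow W^{1,\infty}$, and pass to general $\phi\in H^3$ by density. The only cosmetic difference is that you appeal to Lemma~\ref{generalpsi} to justify the time-independent test function, whereas the paper simply notes that a constant-in-time $\psi\in\S_\sigma$ already lies in $\S_\sigma^T$ and uses Definition~\ref{def:weakonU3} directly.
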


\begin{proof}
We use Lemma \ref{HD21} to decompose $\phi\in \S(D)$ as $\phi=\eta+\nabla \sigma$, where $\eta\in\S_\sigma(D)$, $\sigma\in\S(D)$, and
\begin{equation*}
 \|\nabla \eta\|_{L^\infty}\le \|\nabla \eta \|_{H^2} \le \|\eta\|_{H^3} \le C\|\phi\|_{H^3},
\end{equation*}
using (\ref{HDSob}) and the fact that 
$H^2(D)\subset L^\infty(D)$. Since $u(t)$ is incompressible for every $t\in [0,T]$, we have
\begin{equation*}
 \langle u(t)-u(s),\phi \rangle= \langle u(t)-u(s),\eta+\nabla\sigma \rangle=\langle u(t)-u(s),\eta \rangle.
\end{equation*}
Since $\eta\in \S_\sigma$ and $\partial _t\eta=0$ it follows from the definition of a weak solution at times $t$ and $s$ that
\begin{equation*}
 \langle u(t)-u(s),\phi \rangle =\int^t_s\langle u(\tau)\otimes u(\tau):\nabla \eta \rangle \d \tau
\end{equation*}
and hence
$$
 |\langle u(t)-u(s),\phi \rangle|\le \|u\|^2_{L^\infty(0,T;L^2)}\|\nabla \eta\|_{L^\infty}|t-s|\le C\|u\|^2_{L^\infty(0,T;L^2)}\|\phi\|_{H^3}|t-s|,
$$
which gives \eqref{Lipschitzintime} for all $\phi\in\S$. Now simply observe that $\S(D)$ is dense in $H^3(D)$ to obtain \eqref{Lipschitzintime} as stated.
\end{proof}

A striking corollary of this weak continuity in time is that a mollification \emph{in space only} yields a function that is Lipschitz continuous \emph{in time}.

\begin{corollary}
We have $u_\eps\in{\mathcal L}_\sigma$ for any $\eps>0$; in particular the function $u_\eps(x,\cdot)$ is Lipschitz continuous in $t$ as a function into $L^2(D)$:
\begin{equation}\label{LipschitzL2}
\|u_\eps(\cdot,t)-u_\eps(\cdot,s)\|_{L^2}\le C_\eps\|u\|_{L^\infty(0,T;L^2)}^2|t-s|.
\end{equation}
\end{corollary}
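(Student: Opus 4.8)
The plan is to verify the two conditions defining $\mathcal{L}_\sigma=L^1(0,T;H^3)\cap C^{0,1}([0,T];H_\sigma)$ for $u_\eps$, the substantive one being the Lipschitz estimate \eqref{LipschitzL2}.

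Membership in $L^1(0,T;H^3)$ is the easy part. The mollification $u_\eps=\varphi_\eps\star u$ is divergence free (because $u$ is, in the sense of distributions) and is smooth with the decay needed to lie in $H_\sigma(D)$; transferring all spatial derivatives onto the kernel and applying Young's inequality gives $\|u_\eps(t)\|_{H^3}\le C_\eps\|u(t)\|_{L^2}$ for a.e.\ $t$, with $C_\eps$ depending only on $\eps$ and on $\|\partial^\alpha\varphi_\eps\|_{L^1}$ for $|\alpha|\le 3$. Hence $u_\eps\in L^\infty(0,T;H^3)\subset L^1(0,T;H^3)$, the measurability of $t\mapsto u_\eps(t)$ into $H^3$ being routine once the $L^2$-continuity below is in hand.

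The heart of the matter is \eqref{LipschitzL2}, which I would obtain by duality rather than by a direct estimate, since a plain Cauchy--Schwarz bound only yields H\"older continuity of exponent $1/2$, which is too weak for $C^{0,1}$. For $g\in L^2(D)$ the symmetry property \eqref{symmetry} gives $\langle u_\eps(t)-u_\eps(s),g\rangle=\langle u(t)-u(s),\varphi_\eps\star g\rangle$, and $\varphi_\eps\star g\in H^3(D)$ with $\|\varphi_\eps\star g\|_{H^3}\le C_\eps\|g\|_{L^2}$ by Young's inequality again; Lemma \ref{uLipcitz} then bounds the right-hand side by $C\|u\|_{L^\infty(0,T;L^2)}^2\,C_\eps\|g\|_{L^2}\,|t-s|$. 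Taking the supremum over $\|g\|_{L^2}\le1$ yields \eqref{LipschitzL2}, and since every $u_\eps(t)$ lies in $H_\sigma$, which is closed in $L^2$, this places $u_\eps$ in $C^{0,1}([0,T];H_\sigma)$; combined with the first step, $u_\eps\in\mathcal{L}_\sigma$. The one point requiring care --- and the main (minor) obstacle --- is that the duality step genuinely relies on \eqref{symmetry}, i.e.\ on the evenness of $\varphi$: without it one cannot transfer the mollifier onto the arbitrary $L^2$ test function, and one is left with the weaker H\"older-$1/2$ continuity. One should also check that in the periodic and hybrid cases the bound $\|\varphi_\eps\star g\|_{H^3}\le C_\eps\|g\|_{L^2}$ (and likewise the bound on $\|u_\eps(t)\|_{H^3}$) still holds, via Minkowski's integral inequality applied after periodic extension in the torus directions.
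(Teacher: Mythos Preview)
Your proposal is correct and follows essentially the same route as the paper: both argue by duality, pairing $u_\eps(t)-u_\eps(s)$ with an arbitrary $L^2$ function, using the symmetry property \eqref{symmetry} to transfer the mollifier onto that function so that Lemma~\ref{uLipcitz} applies, and then taking the supremum over the unit ball of $L^2$; the $L^1(0,T;H^3)$ membership and divergence-free property are handled in the same elementary way. Your additional remarks on why evenness of $\varphi$ is needed and on the periodic/hybrid cases are accurate but go slightly beyond what the paper spells out.
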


\begin{proof}
Take $f\in L^2(D)$ with $\|f\|_{L^2(D)}=1$, and let $\phi=f_\eps$. Then $\phi\in H^3(D)$, and
using the symmetry property (\ref{symmetry}) we have
\begin{align*}
\<u(t)-u(s),\phi\>&=\<u(t)-u(s),f_\eps\>\\
&=\<(u_\eps(t)-u_\eps(s)),f\>.
\end{align*}
Since we have $\|\phi\|_{H^3}\le C_\eps\|f\|_{L^2}=C_\eps$ it follows from Lemma \ref{uLipcitz} that
$$
|\<u_\eps(t)-u_\eps(s),f\>|\le C_\eps\|u\|_{L^\infty(0,T;L^2)}^2|t-s|.
$$
Since this holds for every $f\in L^2(D)$ with $\|f\|_{L^2(D)}=1$ we obtain the inequality \eqref{LipschitzL2} and $u_\eps\in C^{0,1}([0,T];L^2)$.


%

As mollification commutes with differentiation it follows that $u_\eps$ is divergence free. 
 Finally, since $u\in L^\infty(0,T; L^2)$, we observe that $u_\eps \in L^\infty(0,T;H^3)$ and
\begin{equation*}
 \|u_\eps\|_{L^1(0,T;H^3)} \le T \|u_\eps\|_{L^\infty(0,T;H^3)}
\end{equation*}
as $[0,T]$ is bounded.
 \end{proof}

Since $u_\eps\in \mathcal{L}_\sigma$  it follows from Lemma \ref{generalpsi} that we can use $u_\eps$ as a test function the definition of a weak solution and obtain
$$
 \langle u(t),u_\eps(t)\rangle-\langle u(0),u_\eps(0)\rangle-\int^t_0\langle u(\tau),\partial_t u_\eps(\tau)\rangle \d \tau=\int^t_0\langle u(\tau)\otimes u(\tau):\nabla u_\eps(\tau)\rangle\d \tau;
$$
we have validated equation \eqref{uepstestfunction}, the first of the two equalities we need.

\subsection{`Mollifying the equation'}\label{MoE}

We will now derive \eqref{mollfiedeqn}. The trick is to test with a mollified test function and move the mollification from the test function onto the terms involving $u$; all terms are then smooth enough to allow for an integration by parts. 

\begin{lemma}
 If $u$ is a weak solution then
 \begin{equation}\label{Mollifiedeqn}
  \int^t_0\langle\partial_t u_\eps,\phi\rangle \d \tau =-\int ^t_0\langle \nabla \cdot [u\otimes u]_\eps ,\phi\rangle \d \tau
 \end{equation}
for every $t\in[0,T]$ and any $\phi\in \mathcal{S}_\sigma^T$.
\end{lemma}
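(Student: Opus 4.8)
The plan is to feed a \emph{spatially} mollified test function into Definition~\ref{def:weakonU3} and then transfer the mollification off the test function and onto the terms containing $u$, by means of the symmetry property \eqref{symmetry}. Fix $\phi\in\mathcal{S}_\sigma^T$ and set $\psi:=\varphi_\eps\star\phi$, where the convolution acts in the spatial variables only (using periodic extension in the periodic directions). First I would check that $\psi\in\mathcal{S}_\sigma^T$: convolution with the fixed kernel $\varphi_\eps\in C_c^\infty(B(0,\eps))$ preserves $C^\infty$-smoothness, does nothing to the $t$-dependence (so $\psi$ is still jointly smooth on $D\times[0,T]$), commutes with the spatial divergence (so $\psi(\cdot,t)$ is divergence free because $\phi(\cdot,t)$ is), and preserves the Schwartz-like decay in the unbounded direction since $\varphi_\eps$ has compact support. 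Hence $\psi$ is an admissible test function in the definition of a weak solution.

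Next I would substitute $\psi=\varphi_\eps\star\phi$ into \eqref{Weaksolution} and apply \eqref{symmetry} (and its obvious matrix-valued analogue) term by term, using $\nabla(\varphi_\eps\star\phi)=\varphi_\eps\star\nabla\phi$ and $\partial_\tau(\varphi_\eps\star\phi)=\varphi_\eps\star\partial_\tau\phi$. This yields $\langle u(\tau),\psi(\tau)\rangle=\langle u_\eps(\tau),\phi(\tau)\rangle$, $\langle u(\tau),\partial_\tau\psi(\tau)\rangle=\langle u_\eps(\tau),\partial_\tau\phi(\tau)\rangle$, and $\langle u(\tau)\otimes u(\tau):\nabla\psi(\tau)\rangle=\langle[u\otimes u]_\eps(\tau):\nabla\phi(\tau)\rangle$. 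In the last term I would then integrate by parts in space to write $\langle[u\otimes u]_\eps:\nabla\phi\rangle=-\langle\nabla\cdot[u\otimes u]_\eps,\phi\rangle$; this is legitimate because $u\otimes u\in L^1$ so $[u\otimes u]_\eps$ and all of its derivatives are bounded, while $\phi(\cdot,\tau)$ decays faster than any polynomial, so the product is integrable and there is no contribution from infinity (and no boundary at all in the periodic directions). Collecting the terms converts \eqref{Weaksolution} into
$$\langle u_\eps(t),\phi(t)\rangle-\langle u_\eps(0),\phi(0)\rangle-\int_0^t\langle u_\eps(\tau),\partial_\tau\phi(\tau)\rangle\,\d\tau=-\int_0^t\langle\nabla\cdot[u\otimes u]_\eps(\tau),\phi(\tau)\rangle\,\d\tau.$$

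Finally I would identify the left-hand side with $\int_0^t\langle\partial_\tau u_\eps(\tau),\phi(\tau)\rangle\,\d\tau$. By \eqref{LipschitzL2} we have $u_\eps\in C^{0,1}([0,T];L^2)$, and since $L^2(D)$ is a Hilbert space it has the Radon--Nikod\'ym property, so $u_\eps$ is differentiable for a.e.\ $\tau$, $\partial_\tau u_\eps\in L^\infty(0,T;L^2)$, and $u_\eps$ is the integral of its derivative. Hence $\tau\mapsto\langle u_\eps(\tau),\phi(\tau)\rangle$ is absolutely continuous with a.e.\ derivative $\langle\partial_\tau u_\eps(\tau),\phi(\tau)\rangle+\langle u_\eps(\tau),\partial_\tau\phi(\tau)\rangle$; integrating this over $[0,t]$ and rearranging turns the displayed identity into \eqref{Mollifiedeqn}. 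I expect the only delicate points to be the verification that $\varphi_\eps\star\phi$ genuinely lies in $\mathcal{S}_\sigma^T$ (in particular that mollification in the periodic directions still commutes with the divergence and preserves decay in the unbounded direction) and the justification of the spatial integration by parts for the nonlinear term; everything involving the time variable is routine once the Lipschitz bound \eqref{LipschitzL2} is available.
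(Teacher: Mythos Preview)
Your proposal is correct and follows essentially the same route as the paper: test with $\psi=\varphi_\eps\star\phi$, use \eqref{symmetry} to shift the mollifier onto the $u$-terms, integrate by parts in space on the nonlinear term, and then invoke the Lipschitz-in-time bound \eqref{LipschitzL2} to justify the temporal integration by parts. Your treatment is in fact slightly more careful than the paper's, since you explicitly verify that $\varphi_\eps\star\phi\in\mathcal S_\sigma^T$ and spell out the Radon--Nikod\'ym argument for the absolute continuity of $u_\eps$, whereas the paper simply asserts these points.
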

\begin{proof}
 Take $\phi\in\mathcal{S}_\sigma^T$, and use $\psi:=\varphi_\eps \star \phi$ as the test function in the weak formulation \eqref{Weaksolution}. Then
 \begin{multline*}
  \langle u(t),(\varphi_\eps\star\phi)(t)\rangle-\langle u(0), (\varphi_\eps\star\phi)(0)\rangle-\int^t_0\langle u(\tau),\partial_t[\varphi_\eps \star \phi](\tau)\rangle \d \tau\\=\int^t_0\langle u(\tau)\otimes u(\tau):\nabla[\varphi_\eps \star \phi](\tau)\rangle\d \tau.
 \end{multline*}
Since the fact that we have chosen $\varphi$ to be even implies that $\langle\varphi_\eps\star u,v\rangle=\langle u,\varphi_\eps\star v\rangle$ (see \eqref{symmetry}) we can move the derivatives and mollification onto the terms involving $u$. We will do this in detail for the term on the right-hand side, since it is the most complicated; the other terms follow similarly. We obtain
\begin{multline*}
 \int^t_0\langle u(\tau)\otimes u(\tau):\nabla[\varphi_\eps \star \phi](\tau)\rangle\d \tau=\int^t_0\langle u(\tau)\otimes u(\tau):\varphi_\eps \star\nabla \phi(\tau)\rangle\d \tau \\=\int^t_0\langle [u(\tau)\otimes u(\tau)]_\eps:\nabla \phi(\tau)\rangle\d \tau=\int^t_0\langle \nabla \cdot[u(\tau)\otimes u(\tau)]_\eps,\phi(\tau)\rangle\d \tau.
\end{multline*}
This implies that
 \begin{align*}
  \langle u_\eps(t), \phi(t)\rangle-\langle u_\eps(0), \phi(0)\rangle-\int^t_0\langle u_\eps(\tau)&,\partial_t \phi(\tau)\rangle \d \tau\\
  &=\int^t_0\langle \nabla \cdot [u(\tau)\otimes u(\tau)]_\eps:\phi(\tau)\rangle\d \tau.
 \end{align*}
 Since $u_\eps$ and $\phi$ are both absolutely continuous in time, the integration-by-parts formula
 \begin{equation*}
  \langle u_\eps(t), \phi(t)\rangle-\langle u_\eps(0), \phi(0)\rangle-\int^t_0\langle u_\eps(\tau),\partial_t \phi(\tau)\rangle \d \tau=\int^t_0\langle \partial_t u_\eps(\tau), \phi(\tau)\rangle \d \tau
 \end{equation*}
 finishes the proof.
\end{proof}

 We now show that \eqref{Mollifiedeqn} holds for a much larger class of functions than $\phi\in \mathcal{S}_\sigma^T$.

\begin{lemma}\label{eqnmollify}
 If $u$ is a weak solution and in addition $u\in L^3(0,T;L^3)$  then \eqref{Mollifiedeqn} holds for any $\phi\in L^3(0,T;L^3) \cap C_\w(0,T;H_\sigma)$.
\end{lemma}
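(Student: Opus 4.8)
The plan is to extend equation \eqref{Mollifiedeqn} from test functions $\phi\in\mathcal S_\sigma^T$ to all $\phi\in L^3(0,T;L^3)\cap C_\w(0,T;H_\sigma)$ by a density-and-continuity argument. First I would observe that under the extra hypothesis $u\in L^3(0,T;L^3)$ both sides of \eqref{Mollifiedeqn}, viewed as linear functionals of $\phi$, are bounded with respect to a norm controlled by $\|\phi\|_{L^3(0,T;L^3)}$ together with $\|\phi\|_{L^\infty(0,T;L^2)}$. For the right-hand side, since $\varphi_\eps$ is a fixed smooth compactly supported mollifier, $\nabla\cdot[u\otimes u]_\eps=[\nabla\varphi_\eps]\star(u\otimes u)$, and by Young's inequality $\|[\nabla\varphi_\eps]\star(u\otimes u)\|_{L^{3/2}}\le C_\eps\|u\otimes u\|_{L^{3/2}}\le C_\eps\|u\|_{L^3}^2$; pairing against $\phi\in L^3$ and integrating in time, Hölder in space and then in time gives
$$
\left|\int_0^t\langle\nabla\cdot[u\otimes u]_\eps,\phi\rangle\,\d\tau\right|\le C_\eps\|u\|_{L^3(0,T;L^3)}^2\|\phi\|_{L^3(0,T;L^3)}.
$$
For the left-hand side I would integrate by parts in time to rewrite $\int_0^t\langle\partial_\tau u_\eps,\phi\rangle\,\d\tau=\langle u_\eps(t),\phi(t)\rangle-\langle u_\eps(0),\phi(0)\rangle-\int_0^t\langle u_\eps(\tau),\partial_\tau\phi(\tau)\rangle\,\d\tau$ for smooth $\phi$; but this reintroduces $\partial_\tau\phi$, which is not controlled in the target norm. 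The cleaner route is to keep $\partial_\tau u_\eps$ on the left: from the Corollary, $u_\eps\in C^{0,1}([0,T];L^2)$, so $\partial_\tau u_\eps\in L^\infty(0,T;L^2)$ with $\|\partial_\tau u_\eps\|_{L^\infty(0,T;L^2)}\le C_\eps\|u\|_{L^\infty(0,T;L^2)}^2$, and hence
$$
\left|\int_0^t\langle\partial_\tau u_\eps,\phi\rangle\,\d\tau\right|\le C_\eps\|u\|_{L^\infty(0,T;L^2)}^2\,T^{1/3}\,\|\phi\|_{L^3(0,T;L^2)}\le C_\eps\|u\|_{L^\infty(0,T;L^2)}^2\,T^{1/3}\,\|\phi\|_{L^\infty(0,T;L^2)},
$$
so both functionals are bounded by a constant times $\|\phi\|_{L^3(0,T;L^3)}+\|\phi\|_{L^\infty(0,T;L^2)}$.

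Next I would set up the density step: let $X:=L^3(0,T;L^3)\cap C_\w(0,T;H_\sigma)$, and for $\phi\in X$ produce a sequence $\phi_n\in\mathcal S_\sigma^T$ (or at least in $\mathcal S_\sigma^T$ up to the weak-continuity subtlety) with $\phi_n\to\phi$ in $L^3(0,T;L^3)$ and $\phi_n\rightharpoonup\phi$ in a sense strong enough to pass to the limit in the $\langle\partial_\tau u_\eps,\phi_n\rangle$ term. A natural construction is to mollify $\phi$ simultaneously in space and time (using a Friedrichs mollifier in $t$ extended by reflection or constant near the endpoints) and then apply the Helmholtz/Leray projection, which by Lemma \ref{HD21} preserves Schwartz-like decay and the relevant Sobolev norms; this yields $\phi_n\in\mathcal S_\sigma^T$. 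Space-time mollification converges in $L^3(0,T;L^3)$ because $C^\infty$ is dense there, and one checks that the spatial Leray projection is bounded on $L^3$ (Calderón–Zygmund), so it does not destroy the convergence; it also preserves the divergence-free constraint that is built into $C_\w(0,T;H_\sigma)$. For the endpoint-in-time terms I would use that $\phi\in C_\w(0,T;H_\sigma)\subset L^\infty(0,T;L^2)$ and that the mollified sequence can be arranged to converge weakly in $L^2$ at each fixed time, or simply note that if we keep $\partial_\tau u_\eps$ on the left there are no endpoint terms to worry about and it suffices to have $\phi_n\to\phi$ weakly-$*$ in $L^\infty(0,T;L^2)$ and strongly in $L^3(0,T;L^3)$, both of which the construction delivers.

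Finally I would pass to the limit: $\int_0^t\langle\partial_\tau u_\eps,\phi_n\rangle\,\d\tau\to\int_0^t\langle\partial_\tau u_\eps,\phi\rangle\,\d\tau$ since $\partial_\tau u_\eps\in L^1(0,T;L^2)$ and $\phi_n\to\phi$ weakly-$*$ in $L^\infty(0,T;L^2)$ (or strongly in $L^3(0,T;L^2)$ on the bounded time interval), and $\int_0^t\langle\nabla\cdot[u\otimes u]_\eps,\phi_n\rangle\,\d\tau\to\int_0^t\langle\nabla\cdot[u\otimes u]_\eps,\phi\rangle\,\d\tau$ since $\nabla\cdot[u\otimes u]_\eps\in L^{3/2}(0,T;L^{3/2})\subset (L^3(0,T;L^3))^*$ and $\phi_n\to\phi$ in $L^3(0,T;L^3)$. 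Since \eqref{Mollifiedeqn} holds with $\phi_n$ in place of $\phi$ for each $n$, it holds for $\phi$ in the limit, which is the claim. The main obstacle I anticipate is purely technical rather than conceptual: constructing the approximating sequence $\phi_n\in\mathcal S_\sigma^T$ so that it simultaneously (i) lies in the admissible test-function class, (ii) converges strongly in $L^3(0,T;L^3)$, and (iii) converges in a topology on $L^\infty(0,T;L^2)$ adequate for the endpoint/time-derivative pairing — in particular handling the time-mollification near $t=0$ and $t=T$ and making sure the Leray projection's $L^3$-boundedness is invoked correctly. If one instead phrases \eqref{Mollifiedeqn} purely with $\partial_\tau u_\eps$ on the left (as written), obstacle (iii) reduces to weak-$*$ convergence in $L^\infty(0,T;L^2)$, which is comfortably within reach.
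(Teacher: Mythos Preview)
Your approach is correct in outline but takes a genuinely different route from the paper. You argue by density and continuity: bound both sides of \eqref{Mollifiedeqn} as linear functionals of $\phi$, construct $\phi_n\in\mathcal S_\sigma^T$ approximating $\phi$ in $L^3(0,T;L^3)$ and (some form of) $L^\infty(0,T;L^2)$, and pass to the limit. The paper instead avoids the density construction entirely. It first uses the Leray projector $\mathbb P$ to remove the divergence-free constraint on the test function (since $\mathbb P\partial_tu_\eps=\partial_tu_\eps$, one gets $\int_0^t\langle\partial_tu_\eps+\mathbb P\nabla\cdot[u\otimes u]_\eps,\psi\rangle\,\d\tau=0$ for every $\psi\in\S(D\times[0,T])$), then invokes the Fundamental Lemma of the Calculus of Variations to deduce the \emph{pointwise} identity $\partial_tu_\eps+\mathbb P\nabla\cdot(u\otimes u)_\eps=0$ a.e.\ in $D\times[0,T]$. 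Once one has an equation holding pointwise, and once one observes that the right-hand side (hence also $\partial_tu_\eps$) lies in $L^{3/2}(0,T;L^{3/2})$, one simply multiplies by any $\phi\in L^3(0,T;L^3)\cap C_\w(0,T;H_\sigma)$ and integrates, using $\mathbb P\phi=\phi$ to remove the projector.

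The trade-off: the paper's argument is shorter and sidesteps exactly the technical obstacle you flag --- building $\phi_n\in\mathcal S_\sigma^T$ with the right simultaneous convergences (spatial cutoffs to get Schwartz decay, time-endpoint extension before mollification, $L^3$-boundedness of $\mathbb P$, etc.). Your route is the more ``generic'' functional-analytic one and would work once those details are filled in, but the pointwise-equation trick via the Fundamental Lemma is both cleaner and yields the slightly sharper information that $\partial_tu_\eps\in L^{3/2}(0,T;L^{3/2})$, not merely $L^\infty(0,T;L^2)$.
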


\noindent(Recall that we use $C_\w(0,T;H_\sigma)$ to denote $H_\sigma$-valued functions that are weakly continuous into $L^2$.)

\begin{proof}
First we will obtain from \eqref{Mollifiedeqn} an equation that holds for all test functions $\psi$ from the space $\S(D\times [0,T])$, not just for $\psi\in\mathcal{S}_\sigma^T$. For this we will use the Leray projection $\mathbb{P}$, the projection onto divergence-free vector fields.
Since for any $\psi\in \S(D\times [0,T])$ we have $\mathbb{P}\psi\in \S_\sigma$, it follows from \eqref{Mollifiedeqn} that
\begin{equation*}
\int^t_0\langle\partial_t u_\eps+\nabla \cdot [u\otimes u]_\eps ,\mathbb{P}\psi\rangle \d \tau=0.
\end{equation*}
Since $\mathbb{P}$ is symmetric ($\langle \mathbb{P}g, f\rangle =\langle g,\mathbb{P} f\rangle $) and $\mathbb{P} \partial_t u_\eps=\partial_t u_\eps$ (since $\mathbb P$ commutes with derivatives and $u_\eps$ is incompressible) we obtain
\begin{equation*}
 \int^t_0\langle\partial_t u_\eps+\mathbb{P}( \nabla \cdot [u\otimes u]_\eps ),\psi\rangle \d \tau=0\qquad\mbox{for every}\quad\psi\in\S(D\times[0,T]).
\end{equation*}
Since $u_\eps$ is Lipschitz in time (as a function from $[0,T]$ into $H_\sigma$) its time derivative $\partial_tu_\eps$ exists almost everywhere (see Theorem 5.5.4 in \cite{AK}, for example) and is integrable; we can therefore deduce using the Fundamental Lemma of the Calculus of Variations ($u\in L^2(\Omega)$ with $\int_\Omega u\cdot\psi=0$ for all $\psi\in C_c^\infty(\Omega)$ implies that $u=0$ almost everywhere in $\Omega$, see e.g.\ Lemma 3.2.3 in \cite{JLJ}) that for almost every $(x,t)\in D\times[0,T]$
\begin{equation*}
\partial_tu_\eps+\mathbb{P}(\nabla \cdot (u\otimes u)_\eps)=0.
\end{equation*}

Observing that $\mathbb{P}\nabla\cdot(u\otimes u)_\eps\in L^{3/2}(0,T;L^{3/2})$  and  that $\partial_tu_\eps$ has the same integrability since $\partial_tu_\eps=-\mathbb{P}\nabla\cdot(u\otimes u)_\eps$, we can now  multiply this equality by any choice of function $\phi\in L^3(0,T;L^3) \cap C_\w(0,T;H_\sigma)$ and integrate:
\begin{align*}
\int^t_0\langle\partial_t u_\eps,\phi\rangle\d\tau&=-\int^t_0\langle \mathbb{P}\nabla \cdot [u\otimes u]_\eps ,\phi\rangle \d \tau\\
&=-\int^t_0\langle \nabla \cdot [u\otimes u]_\eps ,\mathbb{P}\phi\rangle \d \tau=-\int^t_0\langle \nabla \cdot [u\otimes u]_\eps ,\phi\rangle \d \tau,
\end{align*}
where we have used the fact that $\mathbb{P}\phi=\phi$ since $\phi(t)\in H_\sigma$ for every $t\in[0,T]$.
\end{proof}

Note that the condition on $u\in L^3(0,T;L^3)$ is stronger than necessary for the proof but since Theorem \ref{conservation} will need this condition the above result will suffice for our purposes.

We can now use $u$ as a test function in \eqref{Mollifiedeqn} and thereby obtain equation \eqref{mollfiedeqn}, the second of the equalities we need.

\subsection{Energy Conservation}\label{Econ}

We  can now add equations \eqref{uepstestfunction} and \eqref{mollfiedeqn}
to obtain
\begin{multline}\label{energyequationpreconvergence}
 \langle u(t),u_\eps (t) \rangle- \langle u(0),u_\eps (0) \rangle=\\ \int^t_0 \langle u(\tau)\otimes u(\tau) :\nabla u_\eps (\tau) \rangle-\langle\nabla \cdot [u(\tau)\otimes u(\tau)]_\eps ,u(\tau) \rangle\d \tau.
\end{multline}

In order to proceed we will need the following identity. We note that its validity is entirely independent of the Euler equations, but relies crucially on the fact that $\varphi$ is even.

\begin{lemma}
 Suppose that $v\in L^3\cap H_\sigma$ and define
\begin{equation*}
 J_\eps (v):=\int_{\R^3}\int_{\R^3}\nabla\varphi_\eps (\xi)\cdot (v(x+\xi)-
 v(x))|v(x+\xi)-v(x)|^2\d \xi \d x.
\end{equation*}
Then
 \begin{equation*}
  \tfrac{1}{2}J_\eps (v)=\langle\nabla \cdot [v(\tau)\otimes v(\tau)]_\eps ,v(\tau) \rangle-\langle v(\tau)\otimes v(\tau) :\nabla v_\eps (\tau) \rangle.
\end{equation*}
\end{lemma}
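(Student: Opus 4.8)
The plan is to turn both sides into absolutely convergent double integrals over $x$ and the shift variable $\xi$, and then match them term by term; since the statement contains no time derivative, $\tau$ plays no role and I simply fix $v\in L^3\cap H_\sigma$, writing $\delta_\xi v(x):=v(x+\xi)-v(x)$. First I would secure absolute convergence: by the triangle inequality in $L^3$ we have $\int_D|\delta_\xi v(x)|^3\,\d x\le(2\|v\|_{L^3})^3$ uniformly in $\xi$, and $\nabla\varphi_\eps$ is bounded with compact support, so $J_\eps(v)$ converges absolutely and Fubini is available; the same Hölder bound ($L^3\times L^{3/2}$) covers every term produced below, each having a cubic factor in $v$ against a factor of $\nabla\varphi_\eps$. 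This also licenses the integrations by parts inside the convolutions used later.

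Next I would rewrite the right-hand side. Using $\nabla\cdot[v\otimes v]_\eps=(\nabla\varphi_\eps)\star(v\otimes v)$ and $\nabla v_\eps=(\nabla\varphi_\eps)\star v$, writing out the convolutions, substituting $y=x+\xi$, and using that $\varphi$ is even so that $\nabla\varphi_\eps$ is odd, one gets
\begin{equation*}
\langle\nabla\cdot[v\otimes v]_\eps,v\rangle-\langle v\otimes v:\nabla v_\eps\rangle=-\int_D\int_{\R^3}\nabla\varphi_\eps(\xi)\cdot\delta_\xi v(x)\,\big(v(x)\cdot v(x+\xi)\big)\,\d\xi\,\d x.
\end{equation*}
On the other side, expanding $|\delta_\xi v(x)|^2=|v(x+\xi)|^2+|v(x)|^2-2\,v(x)\cdot v(x+\xi)$ inside $J_\eps$ splits $\tfrac12 J_\eps(v)$ into precisely the displayed quantity plus the remainder
\begin{equation*}
R_\eps:=\tfrac12\int_D\int_{\R^3}\nabla\varphi_\eps(\xi)\cdot\delta_\xi v(x)\,\big(|v(x+\xi)|^2+|v(x)|^2\big)\,\d\xi\,\d x,
\end{equation*}
so the whole lemma reduces to showing $R_\eps=0$.

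To kill $R_\eps$ I would split it into the four pieces coming from $v(x+\xi)-v(x)$ and from the two summands $|v(x+\xi)|^2,|v(x)|^2$. The two ``diagonal'' pieces vanish for the elementary reason that $\int_{\R^3}\nabla\varphi_\eps=0$: the one with $v(x)|v(x)|^2$ directly, and the one with $v(x+\xi)|v(x+\xi)|^2$ after swapping the order of integration and translating $x\mapsto x-\xi$. For the two ``cross'' pieces I would carry out the $\xi$-integral first, using $\int_{\R^3}\nabla\varphi_\eps(\xi)\cdot v(x+\xi)\,\d\xi=-(\nabla\cdot v_\eps)(x)$ and $\int_{\R^3}\nabla\varphi_\eps(\xi)\,|v(x+\xi)|^2\,\d\xi=-\nabla(|v|^2)_\eps(x)$ (both again using that $\varphi$ is even); this identifies the two cross pieces as $-\int_D|v|^2(\nabla\cdot v_\eps)\,\d x$ and $\langle v,\nabla(|v|^2)_\eps\rangle$. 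Both are zero: $v_\eps$ is divergence free since mollification commutes with $\nabla\cdot$, and $(|v|^2)_\eps\in H^1(D)$ by Young's inequality (using $v\in L^2\cap L^3$), so $\langle v,\nabla(|v|^2)_\eps\rangle=0$ by the extension of the incompressibility relation \eqref{incomp} to $H^1$ test functions recorded just after that equation. Hence $R_\eps=0$ and the identity follows.

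I do not expect a genuine obstacle here: the content is algebraic once the first step licenses Fubini. The place to be careful is the sign bookkeeping — the change of variables $y=x+\xi$ together with the oddness of $\nabla\varphi_\eps$ is exactly what produces both the factor $\tfrac12$ and the overall minus sign — together with the observation that the two surviving cross terms in $R_\eps$ are precisely the incompressibility pairings that the hypothesis $v\in H_\sigma$ forces to vanish; thus the evenness of $\varphi$ (highlighted in the remark preceding the statement) and the divergence-freeness of $v$ both enter essentially.
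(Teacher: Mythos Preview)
Your argument is correct and is essentially the same computation as the paper's, only organised in the reverse direction: the paper expands $J_\eps(v)$ directly into its six groups of cubic terms and shows that four of them vanish (by $\int\nabla\varphi_\eps=0$, oddness of $\nabla\varphi_\eps$, and incompressibility of $v$ and $v_\eps$) while the remaining two give $2[\langle\nabla\cdot[v\otimes v]_\eps,v\rangle-\langle v\otimes v:\nabla v_\eps\rangle]$, whereas you first rewrite the right-hand side as the $-2\,v(x)\cdot v(x+\xi)$ cross term in the expansion of $|\delta_\xi v|^2$ and then show the residual $R_\eps$ vanishes via the same four cancellations. The ingredients --- evenness of $\varphi$, $\int\nabla\varphi_\eps=0$, $\nabla\cdot v_\eps=0$, and the $H^1$ extension of \eqref{incomp} applied with $\phi=(|v|^2)_\eps$ --- match one-for-one.
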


\begin{proof}
 We have
 \begin{equation*}
  J_\eps (v)=\int_{\R^3}\int_{\R^3}\partial_i \varphi_\eps (\xi) (v_i(x+\xi)-v_i(x))(v_j(x+\xi)-v_j(x))(v_j(x+\xi)-v_j(x))\d \xi \d x.
 \end{equation*}
Expanding the expression for $J_\eps(v)$ yields
\begin{align*}
&\int_{\R^3}\left\{\int_{\R^3}\partial_i\varphi_{\eps}(\xi)v_i(x+\xi)v_j(x+\xi)v_j(x+\xi)\d \xi-\int_{\R^3} \partial_i\varphi_{\eps}(\xi)v_i(x)v_j(x)v_j(x)\d \xi\right.  \\ 
&+\int_{\R^3}\partial_i\varphi_{\eps}(\xi)v_i(x+\xi)v_j(x)v_j(x)\d \xi-\int_{\R^3}\partial_i\varphi_{\eps}(\xi)v_i(x)v_j(x+\xi)v_j(x+\xi)\d \xi\\
&\left.+2\left[ \int_{\R^3}\partial_i\varphi_{\eps}v_i(x)v_j(x+\xi)v_j(x)\d \xi -\int_{\R^3}\partial_i\varphi_{\eps}v_i(x+\xi)v_j(x+\xi)v_j(x)\d \xi\right]\right\} \d x.
\end{align*}
Note that the second term is zero since $\varphi_\eps$ has compact support, and the third term is zero since $v$ is incompressible.
For the fourth term we can change variable and set $\eta=x+\xi$ to obtain
\begin{equation*}
 -\int_{\R^3}\int_{\R^3}\partial_{\eta_i}\varphi_{\eps}(\eta-x)v_i(x)v_j(\eta)v_j(\eta)\d \eta \d x.
\end{equation*}
As $\partial_i\varphi_{\eps}$ is an odd function we have
\begin{equation*}
 \int_{\R^3}\int_{\R^3}\partial_i\varphi_{\eps}(x-\eta)v_i(x)v_j(\eta)v_j(\eta)\d \eta \d x,
\end{equation*}
which becomes
\begin{equation*}
 \int_{\R^3}v_i(x)\partial_{x_i}\left[\int_{\R^3}\varphi_{\eps}(x-\eta)v_j(\eta)v_j(\eta)\d \eta \right ]\d x=\int_{\R^3}v_i(x)\partial_{i}([v_j v_j]_\eps)\d x=0,
\end{equation*}
where again the term becomes zero as we use the incompressibility of $v$.
A similar calculation for the first term gives
\begin{equation*}
\int_{\R^3}\int_{\R^3}\partial_i\varphi_{\eps}(\xi)v_i(x+\xi)v_j(x+\xi)v_j(x+\xi)\d \xi\,\d x= \int_{\R^3}\partial_i([v_i v_jv_j]_\eps) (x)\, \d x=0,
\end{equation*}
using periodicity.
For the final two terms similar calculations yield
$$
 2\int_{\R^3}[v_j\partial_i(v_jv_j)_\eps -v_jv_i\partial_i(v_j)_\eps]\d x=2[\langle\nabla \cdot [v\otimes v]_\eps ,v \rangle-\langle v\otimes v :\nabla v_\eps \rangle]
$$
and the result follows.
\end{proof}

Note that here again the assumption that $v\in L^3 $ is stronger than needed but will hold when we use the result in Theorem \ref{conservation}.

We now want to   look at the limit as $\eps \to 0$ and see what condition on the solution is needed for the right hand side of \eqref{energyequationpreconvergence} to converge to zero.

Let $t_1,t_2\in [0,T]$ with $t_1<t_2$. We can set $t=t_1$  in \eqref{energyequationpreconvergence} and also set $t=t_2$ in \eqref{energyequationpreconvergence} and then take the difference of these two equations to obtain   
\begin{align*}
 \langle u(t_2),u_\eps (t_2) \rangle&- \langle u(t_1),u_\eps (t_1) \rangle\\
 &=\int^{t_2}_{t_1} \langle u(\tau)\otimes u(\tau) :\nabla u_\eps (\tau) \rangle-\langle\nabla \cdot [u(\tau)\otimes u(\tau)]_\eps ,u(\tau) \rangle\d \tau\\
 &=-\frac{1}{2}\int^{t_2}_{t_1}J_\eps (u) \d t .
\end{align*}
Therefore talking the limit as $\eps \to 0$, since  $u\in C_\w([0,T];H_\sigma)$ we obtain
\begin{equation*}
\|u(t_2)\|_{L^2}-\|u(t_1)\|_{L^2}=-\frac{1}{2}\lim_{\eps\to 0} \int^{t_2}_{t_1}J_\eps (u) \d t.
\end{equation*}
Hence any condition on $u$ that guarantees that
\begin{equation}\label{DRC}
\lim_{\eps\to 0} \int^{t_2}_{t_1}J_\eps (u) \d t\to0\qquad\mbox{as}\qquad\eps\to0
\end{equation}
ensures energy conservation. We give two such conditions in the next section.

\section{Two spatial conditions for energy conservation in the absence of boundaries}

First we provide another proof (cf.\ \citealp{RS09}) of energy conservation under condition \eqref{Scondition}.

\begin{theorem}\label{conservation}
If $u\in L^3(0,T;L^3(D))$ is a weak solution of the Euler equations that satisfies
\begin{equation}\label{ours}
 \lim_{|y|\to 0} \frac{1}{|y|}\int_{0}^{T}\int_{D}|u(t,x+y)-u(t,x)|^3\d x\d t=0
\end{equation}
then  energy is conserved on $[0,T]$. 
\end{theorem}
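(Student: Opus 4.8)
The plan is to use the machinery already assembled in Section \ref{section3}, so that everything reduces to verifying the convergence condition \eqref{DRC}, namely
\[
\lim_{\eps\to0}\int_{t_1}^{t_2} J_\eps(u(\tau))\,\d\tau = 0.
\]
Indeed, the derivation preceding \eqref{DRC} shows that for a weak solution $u\in L^3(0,T;L^3(D))$ we have the identity
\[
\|u(t_2)\|_{L^2}^2-\|u(t_1)\|_{L^2}^2 = -\tfrac12\lim_{\eps\to0}\int_{t_1}^{t_2}J_\eps(u(\tau))\,\d\tau,
\]
so once \eqref{DRC} is established, energy conservation on $[0,T]$ follows by taking $t_1,t_2$ arbitrary in $[0,T]$ (using weak continuity to handle the endpoints). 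Thus the only work is a direct estimate on $J_\eps$.

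First I would bound $|J_\eps(v)|$ pointwise in $\tau$ in terms of a translation-difference integral. Writing $\delta_\xi v(x):=v(x+\xi)-v(x)$, we have
\[
J_\eps(v)=\int_{\R^3}\int_{\R^3}\nabla\varphi_\eps(\xi)\cdot\delta_\xi v(x)\,|\delta_\xi v(x)|^2\,\d\xi\,\d x,
\]
and since $\nabla\varphi_\eps(\xi)=\eps^{-4}(\nabla\varphi)(\xi/\eps)$ is supported in $|\xi|\le\eps$ with $|\nabla\varphi_\eps(\xi)|\le C\eps^{-4}$, we get
\[
|J_\eps(v)|\le C\eps^{-4}\int_{|\xi|\le\eps}\int_{\R^3}|\delta_\xi v(x)|^3\,\d x\,\d\xi.
\]
Next I would integrate in time over $[t_1,t_2]$ and interchange the order of integration (Tonelli, everything nonnegative), obtaining
\[
\int_{t_1}^{t_2}|J_\eps(u(\tau))|\,\d\tau \le C\eps^{-4}\int_{|\xi|\le\eps}\left(\int_{t_1}^{t_2}\int_{\R^3}|u(\tau,x+\xi)-u(\tau,x)|^3\,\d x\,\d\tau\right)\d\xi.
\]
Now the hypothesis \eqref{ours} says precisely that the quantity in parentheses is $o(|\xi|)$ as $|\xi|\to0$; more carefully, given $\eta>0$ there is $\rho>0$ such that for $|\xi|\le\rho$ this inner integral is $\le\eta|\xi|\le\eta\eps$. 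Hence for $\eps\le\rho$,
\[
\int_{t_1}^{t_2}|J_\eps(u(\tau))|\,\d\tau \le C\eps^{-4}\cdot\eta\eps\cdot|B(0,\eps)| = C'\eta,
\]
since $|B(0,\eps)|=c\eps^3$. As $\eta>0$ is arbitrary, \eqref{DRC} follows.

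The main obstacle — really the only point requiring care — is justifying the two interchanges and making sure the hypothesis \eqref{ours}, which is stated with a $\limsup$/limit over the whole ball $|y|\to0$ rather than a sup over $|\xi|=\eps$, delivers a uniform $o(|\xi|)$ bound valid for \emph{all} $\xi$ with $|\xi|\le\eps$ simultaneously (not just for one radius). This is fine: the limit being zero means $\sup_{0<|\xi|\le\rho}|\xi|^{-1}\int_{t_1}^{t_2}\int_D|\delta_\xi u|^3 \to 0$ as $\rho\to0$, which is exactly what is used above. I would also note that Lemma \ref{eqnmollify} requires $u\in L^3(0,T;L^3)$, which is part of the hypothesis, so equation \eqref{mollfiedeqn} and hence \eqref{energyequationpreconvergence} are legitimately available; and the identity $\tfrac12 J_\eps(v)=\langle\nabla\cdot[v\otimes v]_\eps,v\rangle-\langle v\otimes v:\nabla v_\eps\rangle$ from the previous lemma requires $v(\tau)\in L^3\cap H_\sigma$, which holds for a.e.\ $\tau$ by hypothesis. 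With these points noted the proof is complete.
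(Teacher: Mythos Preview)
Your proof is correct and follows essentially the same approach as the paper: reduce to showing $\int_{t_1}^{t_2}|J_\eps(u)|\,\d\tau\to0$, bound $|\nabla\varphi_\eps|$ and use the hypothesis \eqref{ours} to control the translation-difference integral. The only cosmetic difference is that the paper first rescales $\xi=\eps\eta$ and then invokes the Dominated Convergence Theorem in $\eta$ (with dominating function $K|\eta||\nabla\varphi(\eta)|$), whereas you bypass this via the direct volume estimate $C\eps^{-4}\cdot\eta\eps\cdot|B(0,\eps)|$; both arguments are equivalent.
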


\begin{proof}
We take $t_1,t_2$ with $0\le t_1\le t_2\le T$, and consider the integral of $|J_\eps(u)|$ over $[t_1,t_2]$; our aim is to show that this is zero in the limit as $\eps\to0$. We start by noticing that
 \begin{equation*}
 \int^{t_2}_{t_1}|J_\eps (u)|\d t
 \le\int^{t_2}_{t_1}\int_{D}\int_{\R^3}\frac{1}{\varepsilon^{4}}\left|\nabla\varphi\left(\frac{\xi}{\varepsilon}\right)\right||u(x+\xi)-u(x)|^3\d \xi\d x\d t.
\end{equation*}
We can then change variables $\xi=\eta \varepsilon$ and obtain,
\begin{equation*}
 \int^{t_2}_{t_1}|J_\eps (u)|\d t
 \le\int^{t_2}_{t_1}\int_{D}\int_{\R^3}\frac{1}{\varepsilon}\left|\nabla\varphi\left(\eta \right)\right||u(x+\varepsilon\eta)-u(x)|^3\d \eta\d x\d t.
\end{equation*}
Using Fubini's Theorem we can exchange the order of the integrals:
\begin{equation*}
 \int^{t_2}_{t_1}|J_\eps (u)|\d t
 \le\int_{\R^3}\int^{t_2}_{t_1}\int_{D}\frac{|u(x+\varepsilon\eta)-u(x)|^3}{|\varepsilon\eta|}\d x\d t\,|\eta|\left|\nabla\varphi\left(\eta \right)\right|\d \eta.
\end{equation*}
Taking limits as $\varepsilon$ goes to zero
\begin{equation*}
 \lim_{\eps\to0}\int^{t_2}_{t_1}|J_\eps(u)|\d t
\le \lim_{\eps\to0}\int_{\R^3}\int^{t_2}_{t_1}\int_{D}\frac{|u(x+\varepsilon\eta)-u(x)|^3}{|\varepsilon\eta|}\d x\d t\,|\eta|\left|\nabla\varphi\left(\eta \right)\right|\d \eta.
\end{equation*}
We are finished if we can exchange the outer integral and limit. This can be done using the Dominated Convergence Theorem.
To do this we define the non-negative function,
\begin{equation*}
 f(y)=\frac{1}{|y|}\int^{t_2}_{t_1}\int_{D}|u(x+y)-u(x)|^3\d x\d t.
\end{equation*}
By assumption $\lim_{|y|\to 0}f(y)=0$, thus for any $\eps>0$, we have  $\sup_{y\in B_0(\eps)}f(y)\le K$ for some $K=K(\eps)$. Further, $\supp(\varphi)$ is compact. Combining these facts we obtain a dominating integrable function
\begin{equation*}                                                                                                                                                                                                                                                                                                                                                                                                                                                                               g(\eta):=K|\eta|\left|\nabla\varphi\left(\eta \right)\right|,
\end{equation*}
and the result follows.\end{proof}

We now show how the general condition in (\ref{DRC}) allows for a simple proof of energy conservation when $u\in L^3(0,T;W^{\alpha,3}(\R^3))$ for any $\alpha>1/3$. The use of condition (\ref{SSC}) to characterise this space is due independently to Aronszajn, Gagliardo, and Slobodeckij, see \cite{DiNPV}, for example.

\begin{theorem}\label{X2}
If $u$ is a weak solution of the Euler equations on the whole space that satisfies $u\in L^3(0,T;W^{\alpha,3}(\R^3))$ for some $\alpha>1/3$, i.e.\ if $u\in L^3(0,T;L^3(\R^3))$ and
  \begin{equation}\label{SSC}
  \int_{\R^3}\int_{\R^3}\frac{|u(x)-u(y)|^3}{|x-y|^{3+3\alpha}}\,\d x\,\d y<\infty,
  \end{equation}
  then energy is conserved. 
\end{theorem}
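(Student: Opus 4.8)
The plan is to reduce this to Theorem \ref{conservation} by showing that the fractional Sobolev condition \eqref{SSC} with $\alpha>1/3$ implies the limit condition \eqref{ours}. Equivalently, by the discussion in Section \ref{Econ}, it suffices to verify the general condition \eqref{DRC}, namely $\int^{t_2}_{t_1}J_\eps(u)\,\d t\to0$ as $\eps\to0$. I will work directly with the estimate for $|J_\eps(u)|$ already derived in the proof of Theorem \ref{conservation}: after the change of variables $\xi=\eps\eta$ and an application of Fubini,
\begin{equation*}
\int^{t_2}_{t_1}|J_\eps(u)|\,\d t\le\int_{\R^3}|\eta|\,|\nabla\varphi(\eta)|\left(\frac{1}{|\eps\eta|}\int^{t_2}_{t_1}\int_{\R^3}|u(x+\eps\eta)-u(x)|^3\,\d x\,\d t\right)\d\eta.
\end{equation*}
So the whole problem is to control the inner increment quotient uniformly for $\eps$ small and $\eta\in\supp(\varphi)$.

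The key step is the elementary observation that for any fixed vector $y\in\R^3$,
\begin{equation*}
\frac{1}{|y|^{3\alpha}}\int_{\R^3}|u(x+y)-u(x)|^3\,\d x\le\int_{\R^3}\int_{\R^3}\frac{|u(x)-u(z)|^3}{|x-z|^{3+3\alpha}}\,\d x\,\d z =: [u]_{\alpha}^3,
\end{equation*}
which follows by writing $z=x+y$, noting $|x-z|=|y|$ on the left, and extending the $x$-integral on the right to the full double integral over $\R^3\times\R^3$ (everything nonnegative). Integrating in time, Minkowski/Tonelli give
\begin{equation*}
\frac{1}{|y|^{3\alpha}}\int^{t_2}_{t_1}\int_{\R^3}|u(x+y)-u(x)|^3\,\d x\,\d t\le\int^T_0[u(t)]_\alpha^3\,\d t=:M<\infty,
\end{equation*}
which is finite precisely by hypothesis \eqref{SSC} (together with $u\in L^3$, so that the finite double integral controls the time-integrated seminorm). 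Therefore the inner quotient in the $J_\eps$ bound, with $y=\eps\eta$, is at most
\begin{equation*}
\frac{1}{|\eps\eta|}\int^{t_2}_{t_1}\int_{\R^3}|u(x+\eps\eta)-u(x)|^3\,\d x\,\d t\le |\eps\eta|^{3\alpha-1}M,
\end{equation*}
and since $3\alpha-1>0$ this tends to $0$ as $\eps\to0$ for each fixed $\eta\ne0$. To pass the limit through the outer $\d\eta$ integral I apply dominated convergence exactly as in Theorem \ref{conservation}: on $\supp(\varphi)\subset B(0,1)$ we have $|\eps\eta|^{3\alpha-1}\le|\eta|^{3\alpha-1}$ for $\eps\le1$, so the integrand is dominated by $M|\eta|^{3\alpha}|\nabla\varphi(\eta)|$, which is integrable since $\varphi\in C_c^\infty$. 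Hence \eqref{DRC} holds and energy is conserved on $[0,T]$.

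The only mildly delicate point is the bookkeeping that identifies $\int^T_0[u(t)]_\alpha^3\,\d t$ with (a quantity controlled by) the double integral in \eqref{SSC}: one should be careful that \eqref{SSC} as written is the space-time integral, i.e.\ $\int^T_0\!\!\int\!\!\int |u(t,x)-u(t,y)|^3/|x-y|^{3+3\alpha}\,\d x\,\d y\,\d t<\infty$, so $M$ is exactly this quantity and no separate time integration is needed. I also note that the remark following \eqref{newone} identifies $3+3\alpha=4+\delta$, i.e.\ $\delta=3\alpha-1>0$, so the statement here is the same as condition \eqref{newone}; nothing extra is required for that translation. I expect no real obstacle beyond this indexing care; the argument is a one-line Sobolev-increment estimate plugged into the already-established machinery.
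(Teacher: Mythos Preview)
Your overall strategy is sound, but the ``key step'' is not justified as written. After the substitution $z=x+y$ the left-hand side is a three-dimensional integral $\int_{\R^3}|u(x)-u(z)|^3/|x-z|^{3\alpha}\,\d x$ taken along the slice $\{z=x+y\}$, while the right-hand side is a six-dimensional integral with a \emph{different} denominator $|x-z|^{3+3\alpha}$. There is no ``extension by positivity'' that compares an integral over a lower-dimensional slice with a full-space integral of a different function. What you are really asserting is the embedding $W^{\alpha,3}\hookrightarrow B^{\alpha}_{3,\infty}$; this is true (with a constant), but its proof genuinely uses the subadditivity of $h\mapsto\|u(\cdot+h)-u\|_{L^3}$ together with an averaging argument (or Littlewood--Paley theory), and is not the one-liner you give.

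The paper sidesteps this issue entirely. Instead of passing through the increment condition of Theorem~\ref{conservation}, it works directly from the bound on $|J_\eps|$ \emph{before} the change of variables $\xi=\eps\eta$: writing $y=x+\xi$ and multiplying and dividing by $|x-y|^{4+\delta}$ (with $\delta=3\alpha-1>0$),
\[
\int_{t_1}^{t_2}|J_\eps(u)|\,\d t\le\int_{t_1}^{t_2}\iint\frac{|x-y|^{4+\delta}}{\eps^4}\Bigl|\nabla\varphi\Bigl(\tfrac{y-x}{\eps}\Bigr)\Bigr|\,\frac{|u(y)-u(x)|^3}{|x-y|^{4+\delta}}\,\d y\,\d x\,\d t.
\]
On the support of $\nabla\varphi(\cdot/\eps)$ one has $|x-y|\le\eps$, so the prefactor is at most $\|\nabla\varphi\|_{L^\infty}\eps^\delta$, and what remains is exactly the time-integrated Gagliardo seminorm $M$. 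Hence $\int|J_\eps|\le CM\eps^\delta\to0$ in one stroke, with no intermediate embedding required. If you wish to keep your route, you must supply a genuine proof of the increment bound; otherwise the paper's direct multiply-and-divide argument is both shorter and gap-free.
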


\begin{proof}
  First observe that for $\alpha>1/3$ the space $W^{\alpha,3}$ has a factor $|x-y|^{4+\delta}$ in the denominator of (\ref{SSC}), where $\delta=3\alpha-1>0$.

  As in the previous proof, our starting point is that
   \begin{equation*}
 \int^{t_2}_{t_1}|J_\eps (u)|\d t
 \le\int^{t_2}_{t_1}\int_{\R^3}\int_{\R^3}\frac{1}{\varepsilon^{4}}\left|\nabla\varphi\left(\frac{\xi}{\varepsilon}\right)\right||u(x+\xi)-u(x)|^3\d \xi\d x\d t.
\end{equation*}
We can write
\begin{align*}
\int^{t_2}_{t_1}\int_{\R^3}\int_{\R^3}&\frac{1}{\varepsilon^{4}}\left|\nabla\varphi\left(\frac{y-x}{\varepsilon}\right)\right||u(y)-u(x)|^3\d \xi\d x\d t\\
&=\int^{t_2}_{t_1}\int_{\R^3}\int_{\R^3}\frac{1}{\varepsilon^{4}}\left|\nabla\varphi\left(\frac{y-x}{\varepsilon}\right)\right||u(y)-u(x)|^3\d \xi\d x\d t\\
&=\int^{t_2}_{t_1}\int_{\R^3}\int_{\R^3}\frac{|y-x|^{4+\delta}}{\eps^4}\left|\nabla\varphi\left(\frac{y-x}{\varepsilon}\right)\right|\frac{|u(y)-u(x)|^3}{|y-x|^{4+\delta}}\d \xi\d x\d t\\
&\le cK_\varphi \epsilon^\delta\int_{t_1}^{t_2}\int\int \frac{|u(y)-u(x)|^3}{|y-x|^{4+\delta}}\d y\d x\d t=c\eps^\delta,
\end{align*}
since $\|\nabla\varphi\|_{L^\infty}\le K_\varphi$ and the integrand is only non-zero within the support of $\varphi$, i.e.\ where $|y-x|\le2\eps$. Energy conservation now follows.
\end{proof}

\section{Energy Balance on $\t^2\times \R_+$}

In this section we first give a definition of what it means to have a weak solution on the domain $D_+:=\t^2\times\R_+$, where $\R_+=[0,\infty)$, that is suitable for our purposes. We then show that such a solution $u$ can be extended to a weak solution $u_E$ on the boundary-free domain $D:=\T^2\times\R$. (Note that in this section we reserve $D$ for this particular domain.)


\subsection{Weak solutions of the Euler equations on $D_+:=\t^2\times \R_+$}

We define $\S(D_+)$ and $\S_\sigma(D_+)$ by restricting functions in $\S(\T^2\times\R)$ and $\S_\sigma(\T^2\times\R)$ to $D_+$; this means that we have Schwartz-like decay in the unbounded direction, and that the functions have a smooth restriction to the boundary.

We define $H_\sigma(D_+)$ to be the completion of $\D_\sigma(D_+)$ in the norm of $L^2(D_+)$; this is equivalent to the completion of
\begin{equation*}
\mathcal{S}_{n,\sigma}(D_+):= \{\phi\in \mathcal{S}(D_+):\nabla\cdot \phi =0 \text{ and } \phi_3=0 \text{ on } \partial D_+\}
\end{equation*}
in the same norm. Functions in $H_\sigma(D_+)$ are weakly divergence free in that they satisfy
\begin{equation}\label{dfH1}
\<u,\nabla\phi\>=0\qquad\mbox{for every}\quad\phi\in H^1(D_+);
\end{equation}
that this holds for every $\phi\in H^1(D_+)$ and not only for $\phi\in\D(D_+)$ (proved exactly as in Section \ref{sec:WSE1}) will be useful in what follows. 

As before,
%
in a slight abuse of notation we denote by $C_\w([0,T];H_\sigma (D_+))$ the collection of all functions $u\:[0,T]\to H_\sigma(D_+)$ that are weakly continuous into $L^2(D_+)$ i.e.
\begin{equation}\label{whatCwis}
 t\mapsto \langle u(t), \phi \rangle_{D_+}
\end{equation}
is continuous for every $\phi\in L^2(D_+)$.



We define 
\begin{equation*}
\S_\sigma^T(D_+):= \{\psi\in C^\infty(D_+\times[0,T]):\ \psi(\cdot,t)\in\S_\sigma(D_+)\mbox{ for every }t\in [0,T] \},
\end{equation*}
which will be our space of test functions; note that these functions are smooth and incompressible, but there is no restriction on their values on $\partial D_+$.

To obtain a weak formulation of the equations on $D_+$ we consider first a smooth solution $u$ with pressure $p$ that satisfies the Euler equations
\begin{equation*}
\begin{cases}
 \partial_t u+ \nabla \cdot (u\otimes u)+ \nabla p =0 & \text{ in } D_+  \\
 \nabla \cdot u=0 & \text{ in } D_+ \\
 u\cdot n=0 & \text{ on } \partial D_+,
\end{cases}
\end{equation*}
where $n$ is the normal to $\partial D_+$, so that the third equation is in fact $u_3=0$ on $\partial D_+$. We can now multiply the first line by a test function $\phi\in \mathcal{S}_\sigma^T$ and integrate over space and time to give
\begin{equation*}
\int^t_0 \langle \partial_t u+ \nabla \cdot (u\otimes u)+ \nabla p, \phi\rangle_{D_+} \d \tau=0.
\end{equation*}
We can now integrate by parts and obtain
  \begin{multline*}
   \langle  u(t), \phi(t)\rangle_{D_+}-\langle  u(0), \phi(0)\rangle_{D_+}-\int^t_0 \langle  u,\partial_t \phi\rangle_{D_+} \d \tau- \int^t_0\langle u\otimes u:\nabla\phi \rangle_{D_+}\d \tau\\-\langle u_3, u\cdot\phi \rangle_{\partial D_+\times [0,t]}\d \tau - \int^t_0 \langle  p,\nabla \cdot  \phi\rangle_{D_+} \d \tau+\langle  p,  \phi\cdot n\rangle_{\partial D_+\times[0,t]} =0.
  \end{multline*}
We notice that as $u_3=0$ on $\partial D_+$  and $\nabla \cdot \phi =0$ in $D_+$ the two terms involving these expressions vanish and we have
\begin{equation*}
   \langle  u(t), \phi(t)\rangle_{D_+}-\langle  u(0), \phi(0)\rangle_{D_+}-\int^t_0 \langle  u,\partial_t \phi\rangle_{D_+} \d \tau- \int^t_0\langle u\otimes u:\nabla\phi \rangle_{D_+}\d \tau+\langle  p,  \phi\cdot n\rangle_{\partial D_+\times[0,t]} =0.
  \end{equation*}

Since we have not restricted the values of $\phi$ on $\partial D_+$ we have a contribution from the boundary, namely
\begin{equation*}
\langle  p,  \phi_3\rangle_{\partial D_+\times[0,t]}.
\end{equation*}
We therefore require $p\in \mathcal{D}'(\partial D _+ \times [0,T])$ in our definition of a weak solution.

\begin{definition}[Weak Solution on $D_+$]
A weak solution of the Euler equations on $D_+\times [0,T]$  is a pair $(u,p)$, where $u\in C_\w([0,T];H_\sigma (D_+))$ and $p\in\mathcal{D}'(\partial D _+ \times [0,T])$
such that
\begin{multline}\label{WeaksolutionD+}
 \langle u(t),\phi(t)\rangle_{D_+}-\langle u(0),\phi(0)\rangle_{D_+}-\int^t_0\langle u(\tau),\partial_t\phi(\tau)\rangle_{D_+} \d \tau\\=\int^t_0\langle u(\tau)\otimes u(\tau):\nabla\phi(\tau)\rangle_{D_+}\d \tau-\langle  p,  \phi\cdot n\rangle_{\partial D_+\times[0,t]},
\end{multline}
for every $t\in[0,T]$ and for every $\phi\in \S_\sigma^T(D_+)$.
\end{definition}

Note that in the final term, $\phi\cdot n=-\phi_3$.

\subsection{Half plane reflection map}

We introduce an extension $u_E$ that takes a weak solution $u$ defined in $D_+$ to one defined on the whole of $D$.
Essentially we extend `by reflection', with appropriate sign changes to ensure that $u_R$, the `reflection' of $u$, is a weak solution on $D_-:=\t^2\times \R_-$. We can then show that $u_E:=u+u_R$ is a weak solution on the whole of $D$ (in the sense of Definition \ref{def:weakonU3}).

Given a vector-valued function $f\:D_\pm\to\R^3$ we define $f_R\:D_\mp\to\R^3$ by
\begin{equation*}
 f_R(x,y,z):= \begin{pmatrix}
       f_1(x,y,-z)\\
          f_2(x,y,-z)\\
          -f_3(x,y,-z)\\
         \end{pmatrix}
\end{equation*}
extending $f$ and $f_R$ by zero beyond their natural domain of definition, we  set 
$$
f_E(x,y,z):=\begin{cases}f(x,y,z)+f_R(x,y,z)&z\neq0\\
\frac{1}{2}(f(x,y,z)+f_R(x,y,z))=(f_1(x,y,0),f_2(x,y,0),0)&z=0.
\end{cases}
$$
Clearly $f_E=f+f_R$ almost everywhere.

\begin{lemma}
 If $u\in H_\sigma(D_+)$ then $u_R\in H_\sigma(D_-)$ and $u_E\in H_\sigma(D)$.
\end{lemma}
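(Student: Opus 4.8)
The plan is to verify the three claimed facts in order: that $u_R \in L^2(D_-)$ with the same norm as $u$, that $u_R$ is weakly divergence free on $D_-$, and then that $u_E = u + u_R$ is weakly divergence free on all of $D$, from which membership in $H_\sigma$ follows by the characterisation of $H_\sigma$ as the $L^2$-closure of $\D_\sigma$ (equivalently, by the fact recorded in the excerpt that an $L^2$ vector field lies in $H_\sigma$ iff it is divergence free in the sense of distributions against all $H^1$ test functions).

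First I would record the trivial measure-theoretic point: the reflection $R(x,y,z) = (x,y,-z)$ is a measure-preserving bijection $D_+ \to D_-$, and the sign flip on the third component does not change $|f_R|$ pointwise, so $\|u_R\|_{L^2(D_-)} = \|u\|_{L^2(D_+)}$ and hence $\|u_E\|_{L^2(D)}^2 = \|u\|_{L^2(D_+)}^2 + \|u_R\|_{L^2(D_-)}^2 = 2\|u\|_{L^2(D_+)}^2 < \infty$ (the $z=0$ slice has measure zero so the value assigned there is irrelevant). Next, for the divergence-free property of $u_R$ on $D_-$: take $\phi \in \D(D_-)$, define $\tilde\phi(x,y,z) := (\phi_1(x,y,-z), \phi_2(x,y,-z), -\phi_3(x,y,-z))$, which lies in $\D(D_+)$, and change variables $z \mapsto -z$ in $\langle u_R, \nabla\phi\rangle_{D_-}$. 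A short chain-rule computation shows this equals $\pm\langle u, \nabla\tilde\phi\rangle_{D_+}$, which vanishes because $u$ is divergence free on $D_+$ (here I would use the extension of \eqref{dfH1} to $H^1$ test functions, or just $\D$; the key is that the sign flips in the definition of $f_R$ are exactly engineered so that applying the reflection twice with the chain rule reproduces $\nabla\cdot$ up to an overall sign). By the $H_\sigma(D_-)$ analogue of the density statement, $u_R \in H_\sigma(D_-)$.

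The heart of the matter is showing $u_E$ is divergence free on the whole of $D$, i.e. $\langle u_E, \nabla\phi\rangle_D = 0$ for all $\phi \in \D(D)$ — and here the point to be careful about is the interface $\{z=0\}$, because although $u$ and $u_R$ are each divergence free on their respective half-spaces, gluing two divergence-free fields across a hypersurface can produce a distributional divergence supported on that hypersurface (a single-layer term proportional to the jump in the normal component). The reason no such term appears here is precisely the sign convention: the normal (third) component of $u_E$ is $u_3$ on $D_+$ and $-u_3(x,y,-z)$ on $D_-$, so its trace from above is $u_3|_{z=0^+}$ and its trace from below is $-u_3|_{z=0^+}$... which a priori is a jump unless $u_3|_{\partial D_+} = 0$. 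I would make this rigorous without invoking traces of a merely-$L^2$ field by working purely distributionally: split $\int_D u_E \cdot \nabla\phi = \int_{D_+} u\cdot\nabla\phi + \int_{D_-} u_R \cdot \nabla\phi$, and in each half-space use that $\nabla\cdot u = 0$ and $\nabla\cdot u_R = 0$ \emph{as distributions on the open half-spaces}, so each integral equals $0$ provided we may use $\phi|_{D_\pm}$ (which is merely in $H^1$ of the half-space, not compactly supported away from the boundary) as a test function — and that is exactly the strengthened statement \eqref{dfH1}, valid for all $\phi \in H^1(D_+)$, that the authors flagged as "useful in what follows." Thus $\langle u_E, \nabla\phi\rangle_D = 0 + 0 = 0$ and, since $u_E \in L^2(D)$, the $D$-analogue of \eqref{incomp}/\eqref{dfH1} gives $u_E \in H_\sigma(D)$.

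The main obstacle is the boundary/interface issue in the last step, and the cleanest resolution is the one just described: the extension of the weak-divergence-free condition from $\D$-test-functions to $H^1$-test-functions on each half-space automatically absorbs the boundary contribution, so one never has to manipulate an interface layer term explicitly. The only mild subtlety worth a sentence in the write-up is that $\phi \in \D(D)$ restricts to an $H^1(D_+)$ function whose trace on $\partial D_+$ need not vanish, which is why one genuinely needs the $H^1$ version of \eqref{dfH1} rather than just the definition against $\D_\sigma(D_+)$; everything else is routine change of variables and the chain rule.
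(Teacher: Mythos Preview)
Your proposal is correct and takes essentially the same approach as the paper: the key step --- showing $u_E$ is weakly divergence free on $D$ by restricting any $\phi\in\D(D)$ to $\phi|_{D_\pm}\in H^1(D_\pm)$ and invoking the $H^1$-extension \eqref{dfH1} on each half-space --- is exactly the paper's argument, and you correctly identify why the $H^1$ (rather than $\D$) version is needed. The paper regards $u_R\in H_\sigma(D_-)$ and $u_E\in L^2(D)$ as not requiring proof; your more detailed treatment of these is fine (one minor slip: the test function $\phi$ for divergence-freeness is scalar, so the reflected test function should simply be $\tilde\phi(x,y,z)=\phi(x,y,-z)$, not the componentwise formula you wrote).
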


\begin{proof}
  The only claim that requires proof is that $u_E$ remains weakly divergence-free, despite possible issues near $x_3=0$. However, given any $\phi\in\D(D)$ we can write $\phi=\phi_++\phi_-$, where $\phi_\pm:=\phi|_{D_\pm}\in H^1(D_\pm)$; we can therefore use (\ref{dfH1}) to write
  $$
  \<u_E,\nabla\phi\>=\<u,\nabla\phi_+\>+\<u_R,\nabla\phi_-\>=0
  $$
  and $u_E$ is weakly divergence-free as claimed.
\end{proof}

Now we will show that, with an appropriate choice of the pressure, $u_R$ is a weak solution of the Euler equations in the lower half space $D_-$. Note that we do not need to extend the pressure distribution $p$.


\begin{theorem}\label{Lowerweaksolution}
 If $(u,p)$ is a weak solution to the Euler equations on $D_+$ then $ (u_R,p)$ is a weak  solution in $D_-$, i.e.
 \begin{multline}\label{WeaksolutionD-}
 \langle u_R(t),\phi(t)\rangle_{D_-}-\langle u_R(0),\phi(0)\rangle_{D_-}-\int^t_0\langle u_R(\tau),\partial_t\phi(\tau)\rangle_{D_-} \d \tau\\=\int^t_0\langle u_R(\tau)\otimes u_R(\tau):\nabla\phi(\tau)\rangle_{D_-}\d \tau-\langle  p,  \phi\cdot n\rangle_{\partial D_-\times[0,t]},
\end{multline}
for every $t\in[0,T]$ and for every $\phi\in \S_\sigma^T(D_-)$.
\end{theorem}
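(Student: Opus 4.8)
The plan is to reduce the statement on $D_-$ to the known weak formulation on $D_+$ via the change of variables $z\mapsto -z$ combined with the sign flip in the third component built into the reflection map. Concretely, given a test function $\phi\in\S_\sigma^T(D_-)$, I would define its `reflection' $\phi_R\in\S_\sigma^T(D_+)$ by the same formula used for $u$, i.e.\ $\phi_R(x,y,z)=(\phi_1(x,y,-z),\phi_2(x,y,-z),-\phi_3(x,y,-z))$. One first checks that $\phi_R$ is again smooth, Schwartz-decaying in the unbounded direction, and divergence-free: if $\nabla\cdot\phi=0$ on $D_-$ then $\partial_1\phi_{R,1}+\partial_2\phi_{R,2}+\partial_3\phi_{R,3}$, evaluated at $(x,y,z)$, equals $(\partial_1\phi_1+\partial_2\phi_2)(x,y,-z)+\partial_3[-\phi_3(x,y,-z)]=(\partial_1\phi_1+\partial_2\phi_2+\partial_3\phi_3)(x,y,-z)=0$, where the two sign changes (one from the component flip, one from the chain rule) cancel. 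So $\phi_R\in\S_\sigma^T(D_+)$ is a legitimate test function for \eqref{WeaksolutionD+}.

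Next I would carry out the change of variables in each of the four volume terms and the one boundary term of \eqref{WeaksolutionD+} written with test function $\phi_R$. Substituting $z\mapsto -z$ turns an integral over $D_+$ into an integral over $D_-$ (the Jacobian has absolute value $1$), and the key bookkeeping is that the pairings are invariant under the simultaneous reflection of both factors. For the linear terms: $\langle u,\phi_R\rangle_{D_+}=\int_{D_+}(u_1\phi_{R,1}+u_2\phi_{R,2}+u_3\phi_{R,3})$; after $z\mapsto -z$ this becomes $\int_{D_-}(u_1(x,y,-z)\phi_1(x,y,z)+u_2(x,y,-z)\phi_2(x,y,z)+u_3(x,y,-z)(-\phi_3(x,y,z)))$, and since $u_R$ has exactly the components $(u_1(\cdot,-z),u_2(\cdot,-z),-u_3(\cdot,-z))$ this is precisely $\langle u_R,\phi\rangle_{D_-}$. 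The same computation (applied pointwise in time, and using that $\partial_t$ commutes with the reflection) handles the $\partial_t\phi$ term. For the nonlinear term I would check that $u\otimes u:\nabla\phi_R$, after the substitution, matches $u_R\otimes u_R:\nabla\phi$: here one must track that $\partial_3$ acting on $\phi_R$ picks up a sign from the chain rule, that the third component of $\phi_R$ carries an extra sign, and that $u_R$ carries a sign in its third slot; a term-by-term check over the nine index pairs $(i,j)$ shows all signs cancel in pairs (the only terms involving an odd number of `$3$' indices among $\{i,j, \text{the }\partial_3\}$ are the ones that flip, and these flip consistently on both sides). Finally the boundary term: $\langle p,\phi_R\cdot n\rangle_{\partial D_+\times[0,t]}$ where $n$ is the outward normal to $D_+$ at $z=0$, i.e.\ $n=(0,0,-1)$, so $\phi_R\cdot n=-\phi_{R,3}|_{z=0}=-(-\phi_3(x,y,0))=\phi_3(x,y,0)$; on the other side $\partial D_-=\{z=0\}$ with outward normal $(0,0,+1)$, so $\phi\cdot n=\phi_3|_{z=0}$, and since the pressure distribution $p$ is not altered these two boundary pairings agree. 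Substituting all of this into \eqref{WeaksolutionD+} yields exactly \eqref{WeaksolutionD-}.

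\textbf{Main obstacle.} I expect the only genuinely delicate point to be the sign bookkeeping in the nonlinear term $\langle u\otimes u:\nabla\phi\rangle$, since it is quadratic in $u$ (each factor of $u_R$ contributes a potential sign in its third component) and involves $\nabla\phi_R$ (the $\partial_3$-derivative contributes a chain-rule sign, and $\phi_{R,3}$ an explicit sign). The clean way to organise this, which I would use, is to note that the reflection can be encoded by the diagonal matrix $M=\mathrm{diag}(1,1,-1)$ together with the coordinate map $S(x,y,z)=(x,y,-z)$: one has $u_R = M\,(u\circ S)$ and $\phi_R=M\,(\phi\circ S)$, hence $\nabla\phi_R = M\,(\nabla\phi)(\cdot\circ S)\,M$ (the right $M$ from the chain rule $\partial(\cdot\circ S)=(\partial\cdot)\circ S \cdot M$), so that $u_R\otimes u_R : \nabla\phi_R = M(u\circ S)\otimes M(u\circ S) : M(\nabla\phi\circ S)M = (u\otimes u : \nabla\phi)\circ S$ because $M^2=I$ and the trace pairing $A:B=\mathrm{tr}(A^TB)$ is invariant under $A\mapsto MAM$, $B\mapsto MBM$. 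The same matrix identity disposes of all the linear pairings simultaneously. With this observation the proof is essentially a single change of variables $S$ in \eqref{WeaksolutionD+}, applied with test function $\phi_R$, followed by reading off that every term is the $S$-pullback of the corresponding term in \eqref{WeaksolutionD-}; the boundary term is the only one that is not a pullback, and it is handled separately as above by the matching of the two outward normals.
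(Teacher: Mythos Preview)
Your proposal is correct and follows essentially the same route as the paper: reflect the test function to obtain $\phi_R\in\S_\sigma^T(D_+)$, use it in \eqref{WeaksolutionD+}, and change variables $z\mapsto -z$ term by term, with the boundary term handled separately by matching the outward normals. The paper merely asserts the nonlinear identity ``case-by-case''; your matrix encoding via $M=\mathrm{diag}(1,1,-1)$ and the trace invariance $MAM:MBM=A:B$ is a tidy way to do that bookkeeping in one stroke, but the underlying argument is the same.
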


Note that now in the final term we have $\phi\cdot n=\phi_3$.

\begin{proof} Notice first that any $\phi\in\S_\sigma^T(D_-)$ can be written as $\psi_R$, where $\psi=\phi_R\in\S_\sigma^T(D_+)$. Now, the change of variables $(x_1,x_2,x_3)\to (y_1,y_2,-y_3)$ in the linear term yields
 \begin{equation*}
  \langle u_R,\psi_R \rangle_{D_-}=\langle u,\psi \rangle_{D_+}.
 \end{equation*}
For the nonlinear term one can check case-by-case, with the same change of variables, that
\begin{equation*}
 \int_{D_-}[(u_R)_i(u_R)_j \partial_j(\psi_R)_i](x) \d x=\int_{D_+}[u_iu_j \partial_j\psi_i](y) \d y.
\end{equation*}
Finally for the pressure term we have
$$
\<p,\psi\cdot n\>_{\partial D_+}=\<p,\psi_3\>=-\<p,\phi_3\>=\<p,\phi\cdot n\>_{\partial D_-},
$$
since $\psi_3(x,y,0)=-\phi_3(x,y,0)$.\end{proof}


By adding \eqref{WeaksolutionD+} and \eqref{WeaksolutionD-} it follows that $u_E$ is a weak solution of $D$.

 \begin{corollary}
 	The extension  $u_E$ is a weak solution of the Euler equations on $D$  in the sense of Definition \ref{def:weakonU3}.
\end{corollary}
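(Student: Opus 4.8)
The plan is to combine the weak formulation on $D_+$ from \eqref{WeaksolutionD+} with the weak formulation on $D_-$ from \eqref{WeaksolutionD-} (Theorem \ref{Lowerweaksolution}) by splitting an arbitrary test function for the full domain into its restrictions to the upper and lower half-spaces. Concretely, given $\psi\in\S_\sigma^T(D)$, set $\psi_+:=\psi|_{D_+}$ and $\psi_-:=\psi|_{D_-}$; both are admissible test functions in $\S_\sigma^T(D_\pm)$ (they are smooth up to $\{x_3=0\}$, Schwartz-decaying in the unbounded direction, and divergence free). The identity \eqref{WeaksolutionD+} applied to $\psi_+$ and \eqref{WeaksolutionD-} applied to $\psi_-$ can then be added.

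The key point is that, since $u_E=u+u_R=u$ a.e.\ on $D_+$ and $=u_R$ a.e.\ on $D_-$, each of the volume integrals over $D$ splits additively: $\langle u_E(t),\psi(t)\rangle_D=\langle u(t),\psi_+(t)\rangle_{D_+}+\langle u_R(t),\psi_-(t)\rangle_{D_-}$, and likewise for the $\partial_t\psi$ and the $u\otimes u:\nabla\psi$ terms (the set $\{x_3=0\}$ has measure zero in $D$, so the redefinition of $f_E$ there is irrelevant for integration). Therefore adding the two weak formulations produces exactly the left- and right-hand sides of Definition \ref{def:weakonU3} for $u_E$ on $D$, \emph{except} for the two boundary contributions $-\langle p,\psi_+\cdot n\rangle_{\partial D_+\times[0,t]}$ and $-\langle p,\psi_-\cdot n\rangle_{\partial D_-\times[0,t]}$. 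Here one uses that the outward normals to $D_+$ and $D_-$ along their common boundary $\{x_3=0\}$ are opposite: $\psi_+\cdot n=-\psi_3$ on $\partial D_+$ while $\psi_-\cdot n=+\psi_3$ on $\partial D_-$ (as noted after each of those displays), and $\psi_+$ and $\psi_-$ agree on $\{x_3=0\}$ since $\psi$ is continuous there. Hence the two pressure terms are $-\langle p,-\psi_3\rangle$ and $-\langle p,+\psi_3\rangle$, which cancel exactly, and no extension of the pressure distribution is needed.

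One should also record that $u_E\in C_\w([0,T];H_\sigma(D))$: membership in $H_\sigma(D)$ for each fixed $t$ is the content of the preceding lemma, and weak continuity into $L^2(D)$ follows from the weak continuity of $u$ into $L^2(D_+)$ together with the fact that reflection is a bounded linear map, so for $\phi\in L^2(D)$ the map $t\mapsto\langle u_E(t),\phi\rangle_D=\langle u(t),\phi_+\rangle_{D_+}+\langle u(t),(\phi_-)_R\rangle_{D_+}$ is continuous. The main (and only mildly delicate) obstacle is the bookkeeping at $\{x_3=0\}$: one must be careful that the boundary-trace pairings in \eqref{WeaksolutionD+} and \eqref{WeaksolutionD-} are taken with the \emph{same} restriction of $\psi$ to the interface and with opposite normals, so that the cancellation is genuine; everything else is an additive decomposition of Lebesgue integrals over a measure-zero-separated union. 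I would therefore phrase the proof as: let $\psi\in\S_\sigma^T(D)$, split, apply \eqref{WeaksolutionD+} and \eqref{WeaksolutionD-}, add, observe the volume terms reassemble by $u_E=u+u_R$ a.e., observe the boundary terms cancel by the opposite-normal identity, and conclude.
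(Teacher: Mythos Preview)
Your proposal is correct and follows exactly the same approach as the paper: split a test function $\psi\in\S_\sigma^T(D)$ into its restrictions to $D_\pm$, apply \eqref{WeaksolutionD+} and \eqref{WeaksolutionD-}, add, and observe that the pressure boundary terms cancel because of the opposite normals. Your write-up is in fact more thorough than the paper's, since you also verify explicitly that $u_E\in C_\w([0,T];H_\sigma(D))$, which the paper's proof of this corollary leaves implicit.
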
 

\begin{proof}
For $\zeta \in \mathcal{S}_\sigma^T$ we can use $\zeta|_{D_+}$ as a test function in \eqref{WeaksolutionD+} and $\zeta|_{D_-}$ in \eqref{WeaksolutionD-} and add the two equations to obtain
 \begin{align*}
 \langle u_E(t),\zeta (t)\rangle_{D}-\langle u_E(0),\zeta(0)\rangle_{D}-\int^t_0\langle u_E(\tau)&,\partial_\tau\zeta(\tau)\rangle_{D} \d \tau\\
 &=\int^t_0\langle u_E(\tau)\otimes u_E(\tau):\nabla\zeta(\tau)\rangle_{D}\d \tau,
 \end{align*}
 where the pressure terms have cancelled due to the opposite signs of the normal in the two domains; but this is now the definition of a weak solution of the Euler equations in $D$.
\end{proof}


 Since $u_E$ is a weak solution of the incompressible Euler equations on $D$,   Corollary \ref{conservation} guarantees that if $u_E\in L^3(0,T;L^3(D))$ and
\begin{equation}\label{conditionE}
 \lim_{|y|\to 0} \frac{1}{|y|}\int_{0}^{T}\int_{D}|u_E(t,x+y)-u_E(t,x)|^3\d x\d t=0
\end{equation}
  then $u_E$ conserves energy on $D\times [t_1,t_2]$. Due to the definition of $u_E$ this implies that
%
%
 \begin{equation*}
 \|u_E(t_2)\|^2_{ L^2(D)}-\| u_E(t_1)\|^2_{ L^2(D)}=2\| u(t_2)\|^2_{ L^2(D_+)}-2\|u(t_1)\|^2_{L^2(D_+)}=0,
 \end{equation*}
 i.e.\ we obtain energy conservation for $u$. We now find conditions on $u$ alone (rather than $u_E=u+u_R$) that guarantee that \eqref{conditionE} is satisfied.

\section{Energy Conservation on $D_+$} 

Here we will prove our main result in Theorem \ref{MainD+}: energy conservation on $D_+$ under certain assumptions on the weak solution $u$.
The two bulk conditions we need for $u$ to  conserve energy are similar to the conditions needed for Corollary \ref{conservation} where we had no boundary.
We will impose two extra conditions to deal with the presence of the boundary: firstly, that there exists a $\delta>0$ such that $u\in{L^3(0,T;L^\infty(\t^2\times [0,\delta)}))$;
secondly that $u(\cdot, t)$ is continuous at the boundary for almost every $t$. 

We make some preliminary definitions and observations concerning the kind of continuity we require at $\partial D_+$.

\begin{definition}[Continuity at a Subset]
 We say that a function $f$ defined on $\Omega$ is $C_{\Gamma}$, for $\Gamma \subset \Omega$, if for all $x\in \Gamma$ and for each $\eps>0$ there exists a $\delta>0$ such that
 \begin{equation*}
  y\in\Omega\ \mbox{ and }\ |y-x|<\delta\qquad\Rightarrow\qquad |f(x)-f(y)|<\eps.
 \end{equation*}
\end{definition}
If $\Gamma$ is a compact subset of $\Omega$ then $f\in C_{\Gamma}$ is in fact uniformly continuous at the subset, in the following sense. 
\begin{lemma}\label{uniforctsat}
 If $f$ is $C_{\Gamma}$ and $\Gamma$ is compact   then for all $\eps>0$ there exist $\delta>0$ such that for all $x\in \Gamma$
 \begin{equation*}
 y\in\Omega\ \mbox{ and }\ |y-x|<\delta\qquad\Rightarrow\qquad  |f(y)-f(x)|<\eps;
 \end{equation*}
 in particular, there exists a function $w\colon [0,\infty )\to [0,\infty )$ with $w(0)=0$ and continuous at $0$, such that
  \begin{equation*}
|f(x+z)-f(x)|<w(|z|)
 \end{equation*}
 whenever $x\in\Gamma$ and $x+z\in\Omega$.
 \end{lemma}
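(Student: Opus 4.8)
The plan is to prove the two assertions in sequence: first the uniform continuity at $\Gamma$, then the existence of a modulus $w$. For the uniform continuity, I would argue by contradiction in the standard compactness style. Suppose the conclusion fails: then there is some $\eps_0>0$ such that for every $n\in\N$ there exist $x_n\in\Gamma$ and $y_n\in\Omega$ with $|y_n-x_n|<1/n$ but $|f(y_n)-f(x_n)|\ge\eps_0$. Since $\Gamma$ is compact, pass to a subsequence along which $x_n\to x_\infty\in\Gamma$; then also $y_n\to x_\infty$ because $|y_n-x_n|\to0$. Now apply the hypothesis $f\in C_\Gamma$ at the point $x_\infty$ with the value $\eps_0/2$: there is $\delta_\infty>0$ so that $z\in\Omega$ and $|z-x_\infty|<\delta_\infty$ forces $|f(z)-f(x_\infty)|<\eps_0/2$. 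For $n$ large both $x_n$ and $y_n$ lie within $\delta_\infty$ of $x_\infty$ (note $x_n\in\Gamma\subset\Omega$ and $y_n\in\Omega$, so both are admissible points $z$), whence $|f(y_n)-f(x_n)|\le|f(y_n)-f(x_\infty)|+|f(x_\infty)-f(x_n)|<\eps_0$, contradicting $|f(y_n)-f(x_n)|\ge\eps_0$. This proves the first displayed statement.

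For the modulus $w$, I would define it directly from the uniform continuity just established. For each $r>0$ set
\begin{equation*}
w(r):=\sup\{\,|f(x+z)-f(x)|:\ x\in\Gamma,\ x+z\in\Omega,\ |z|\le r\,\},
\end{equation*}
with $w(0):=0$. Then $w$ is nondecreasing in $r$, and the inequality $|f(x+z)-f(x)|\le w(|z|)$ holds by construction whenever $x\in\Gamma$ and $x+z\in\Omega$; to get the strict inequality as stated one can replace $w(r)$ by $w(2r)$ or simply note $|f(x+z)-f(x)|\le w(|z|)< w(|z|)+1$ and rescale — more cleanly, take $\tilde w(r):=2w(r)$, which still has $\tilde w(0)=0$. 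The key point is continuity of $w$ at $0$, i.e. $w(r)\to0$ as $r\to0^+$: given $\eps>0$, the first part of the lemma supplies $\delta>0$ with $|f(x+z)-f(x)|<\eps$ for all $x\in\Gamma$ and all admissible $z$ with $|z|<\delta$, hence $w(r)\le\eps$ for all $r<\delta$; since $\eps$ was arbitrary, $w(r)\to0$. One should also remark that $w$ is finite for every $r$ — this follows either from finiteness of $w$ on $(0,\delta)$ plus possibly a boundedness assumption, or one can simply restrict attention to small $|z|$, which is all that is used in the applications; alternatively one truncates, replacing $w(r)$ by $\min\{w(r),1\}$, which preserves all required properties.

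I do not expect any serious obstacle here; the statement is a routine compactness argument. The only mild subtlety worth stating carefully is bookkeeping about which points are "admissible": in the compactness argument one must check that the limit point $x_\infty$ lies in $\Gamma$ (so that $f\in C_\Gamma$ applies there) and that both $x_n$ and $y_n$ are points of $\Omega$ (so that they may be used as the variable point in the $C_\Gamma$ condition at $x_\infty$) — both hold since $\Gamma\subset\Omega$ and $y_n\in\Omega$ by construction. A second point to watch is that, without some global bound on $f$, the supremum defining $w(r)$ could in principle be infinite for large $r$; this is harmless because only the behaviour of $w$ near $0$ is needed, and one may define $w$ by truncation as indicated above to make the statement literally true as written.
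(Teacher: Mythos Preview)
Your argument is correct and follows essentially the same route as the paper: a contradiction via compactness, extracting a subsequence $x_n\to x\in\Gamma$, then using $f\in C_\Gamma$ at $x$ on both $x_n$ and $y_n$. The paper's proof is terser (and in fact omits the construction of the modulus $w$ entirely), so your added care about the admissibility of $x_n,y_n$ and your explicit definition of $w$ are fine supplements rather than departures.
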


\begin{proof}
%
 Fix $\eps>0$ and take a sequence $y_n\in \Omega$ and $x_n\in \Gamma$ and assume that $|y_n-z_n|<\tfrac{1}{n}$ but $|f(y_n)-f(x_n)|\ge \eps$. However, we know that $\Gamma$ is compact and so there exists subsequences $y_{n_j}\to x$ and $x_{n_j}\to x$; by applying continuity at a subset for $f$ we have $f(y_{n_j})\to f(x)$ and   $f(x_{n_j})\to f(x)$, a contradiction.
\end{proof}

We can now provide conditions on $u$ to ensure energy conservation.


\begin{theorem}\label{MainD+}
Let $u\in L^3(0,T;L^3(D_+))$ be a weak solution of the Euler equations that satisfies $u\in L^3(0,T;L^\infty(\t^2 \times [0,\delta))$ for some $\delta>0$, $u(\cdot, t)\in C_{\partial D_+}$ for almost every $t$, and
\begin{equation}\label{conditiononu}
\lim_{|y|\to 0} \frac{1}{|y|}\int^{t_2}_{t_1}\iint_{\t^2}\int^\infty_{|y|}|u(t,x+y)-u(t,x)|^3\d x_3\d x_1\d x_2\d t=0;
\end{equation}
then $u$ conserves energy  on $D_+\times[t_1,t_2]$.
\end{theorem}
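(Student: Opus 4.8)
The plan is to reduce energy conservation on $D_+$ to the boundary-free result already obtained for $u_E$ on $D$. By the Corollary established above, it suffices to verify that $u_E\in L^3(0,T;L^3(D))$ and that the translation condition \eqref{conditionE} holds for $u_E$; since $\|u_E(t)\|_{L^2(D)}^2 = 2\|u(t)\|_{L^2(D_+)}^2$, conservation of energy for $u_E$ immediately gives conservation for $u$. The $L^3$ bound is immediate: $u_E = u + u_R$ a.e., $u_R$ is just a reflection of $u$ with a sign change on one component, so $\|u_E\|_{L^3(0,T;L^3(D))} \le 2\|u\|_{L^3(0,T;L^3(D_+))} < \infty$.

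The main work is therefore to deduce \eqref{conditionE} from \eqref{conditiononu} plus the two boundary hypotheses. I would split the translation integral $\frac{1}{|y|}\int_{t_1}^{t_2}\int_D |u_E(x+y)-u_E(x)|^3\,\d x\,\d t$ according to where $x$ and $x+y$ sit relative to the reflection plane $\{x_3=0\}$. On the bulk region where $x_3 > |y|$ (so that $x$ and $x+y$ both have $x_3$-coordinate of the same sign, say positive), $u_E(x) = u(x)$ and $u_E(x+y) = u(x+y)$, and by symmetry the same holds on $x_3 < -|y|$; these two pieces are each controlled directly by \eqref{conditiononu} (the $D$-integral over $\{|x_3|>|y|\}$ is twice the $D_+$-integral over $\{x_3 > |y|\}$ after using the reflection symmetry). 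The remaining region is the thin slab $S_{|y|} := \t^2\times(-|y|,|y|)$, which has measure $O(|y|)$; here I must bound $\frac{1}{|y|}\int_{t_1}^{t_2}\int_{S_{|y|}} |u_E(x+y)-u_E(x)|^3\,\d x\,\d t$ and show it tends to $0$ as $|y|\to 0$.

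For the slab term, the factor $1/|y|$ is exactly compensated by the $O(|y|)$ measure of $S_{|y|}$, so it suffices to show the integrand's average over $S_{|y|}\times[t_1,t_2]$ goes to zero, i.e. that $\int_{t_1}^{t_2}\frac{1}{|y|}\int_{S_{|y|}}|u_E(x+y)-u_E(x)|^3\,\d x\,\d t \to 0$. This is where the two extra hypotheses enter. The bound $u\in L^3(0,T;L^\infty(\t^2\times[0,\delta)))$ gives, for $|y|<\delta/2$, a pointwise bound $|u_E(x+y)-u_E(x)|^3 \le C\|u(t)\|_{L^\infty(\t^2\times[0,\delta))}^3$ on the slab, with the right-hand side in $L^1(t_1,t_2)$; this already shows the slab contribution is $O(1)$ but not that it vanishes. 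To get decay, I use $u(\cdot,t)\in C_{\partial D_+}$: by Lemma \ref{uniforctsat} there is a modulus $w$ (depending on $t$) with $|u(t,x+z)-u(t,x)|\le w_t(|z|)$ whenever $x\in\partial D_+$; combined with the reflection structure of $u_E$ near $x_3=0$ — where $u_E$ is a genuine extension by an even/odd reflection of a function continuous up to the boundary — this forces $|u_E(t,x+y)-u_E(t,x)| \to 0$ uniformly over $x\in S_{|y|}$ as $|y|\to 0$, for a.e. $t$. Feeding this into dominated convergence in $t$ (dominated by the $L^\infty$ bound above) closes the argument.

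The hard part will be the slab estimate, and specifically verifying that continuity of $u$ up to $\partial D_+$ really does control the increments of $u_E$ \emph{across} the reflection plane uniformly on $S_{|y|}$: one must handle the three cases $x_3,(x+y)_3$ both positive, both negative, or opposite signs, and in the mixed case interpolate through a point of $\partial D_+$ using that $u_E$ agrees with $u$ (resp.\ $u_R$) on each side and that both limit to the same boundary trace $(u_1,u_2,0)$ at $x_3=0$. The $L^\infty$ hypothesis is what lets us apply dominated convergence to pass the $|y|\to 0$ limit inside the time integral despite the modulus $w_t$ depending measurably on $t$. One should also take care that the discrepancy between $u_E$ and $u+u_R$ on the measure-zero set $\{x_3=0\}$ is irrelevant throughout, as all integrals are over sets of positive measure.
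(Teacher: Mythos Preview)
Your proposal is correct and follows essentially the same route as the paper: reduce to the boundary-free condition \eqref{conditionE} for $u_E$, split the spatial integral into the two bulk regions $\{x_3>|y|\}$, $\{x_3<-|y|\}$ (handled by \eqref{conditiononu} and reflection symmetry) and the thin slab $\{|x_3|\le|y|\}$, then on the slab use the $L^3_tL^\infty_x$ hypothesis to dominate and pass the $|y|\to0$ limit inside the time integral, and finally use the modulus of continuity $w$ from Lemma~\ref{uniforctsat} (applied to $u_E$, which inherits $C_{\{x_3=0\}}$ from $u\in C_{\partial D_+}$ and $u_3|_{\partial D_+}=0$) to kill the slab contribution for a.e.\ $t$. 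The paper's execution of the slab step is marginally slicker---it writes any $x$ in the slab as $x'+z$ with $x'\in\{x_3=0\}$ and uses the triangle inequality $|u_E(x'+z+y)-u_E(x'+z)|\le w(|z+y|)+w(|z|)\le 2w(2|y|)$, which handles your three sign cases in one stroke---but this is exactly the ``interpolate through a boundary point'' idea you describe.
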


\begin{proof}
We can split
$$
 \lim_{|y|\to 0} \frac{1}{|y|}\int_{0}^{T}\int_{D}|u_E(t,x+y)-u_E(t,x)|^3\d x\d t=0
 $$
up into three sub-integrals over the regions $A:=\{x|x_3>|y|\}$, $B:=\{x|x_3<-|y|\}$ and $C:=\{x| |x_3|\le |y|\}$.
 We have
\begin{align*}
 |u_E(t,x+y)-u_E(t,x)|^3 & \le\biggl( [\mathbb{I}_{A}(x)+\mathbb{I}_{B}(x)+\mathbb{I}_{C}(x)]|u_E(x+y) -u_E(x)|\biggr)^3\\
&= \left[\mathbb{I}_{A}(x)+\mathbb{I}_{B}(x)+\mathbb{I}_{C}(x)\right]\,|u_E(x+y) -u_E(x)|^3.
\end{align*}
For $\int_A$  we see that since $x_3>0$ and $x_3+y_3>0$ then $u_E$ is in fact $u$, thus after integrating and taking the limit it goes to zero by \eqref{conditiononu}. For $\int_B$ a very similar argument (but with the change of variables $x_3\mapsto -z_3$) gives the same outcome.

 We are left with  $\int_C$:  we need to show that
\begin{equation*}
 \lim_{|y|\to 0} \frac{1}{|y|}\int^{t_2}_{t_1}\iint_{\t^2}\int_{-|y|}^{|y|}|u_E(t,x+y)-u_E(t,x)|^3\d x_3\d x_2\d x_1\d t=0.
\end{equation*}
We have assumed that $u\in L^3(0,T;L^\infty(\t^2 \times [0,\delta))$ and so
\begin{equation*}
 u_E\in L^3(0,T;L^\infty(\t^2 \times (-\delta,\delta)).
\end{equation*}
 Then, since for all $|y|<\delta$ we have
\begin{equation*}
 \frac{1}{|y|}\iint_{\t^2}\int_{-|y|}^{|y|}|u_E(t,x+y)-u_E(t,x)|^3\d x_3\d x_2\d x_1\le C \sup_{x\in\t^2\times [0,\delta)} |u(t)|^3,
\end{equation*}
 we can then  move the limit inside the time integral using the Dominated Convergence Theorem, and it suffices to show that
 \begin{equation*}
 \lim_{|y|\to 0} \frac{1}{|y|}\iint_{\t^2}\int_{-|y|}^{|y|}|u_E(t,x+y)-u_E(t,x)|^3\d x_3\d x_2\d x_1=0
\end{equation*}
for almost every $t\in(t_1,t_2)$.

As, $u(\cdot, t)\in C_{\partial D_+}$ and $u\cdot n=0$ on the boundary,  the boundary values are the same for $u$ and $u_R$ and so $u_E(\cdot,t) \in C_{\{z=0\}}$.

Now fix $t$ and let $x'\in \{z=0\}$; then   
\begin{multline*}
 |u_E(t,x'+z+y)-u_E(t,x'+z)|\le |u_E(t,x'+z+y)-u_E(t,x')+u_E(t,x')-u_E(t,x'+z)|\\\le w(t,|y+z|)+w(t,|z|)\le 2w(t,2|y|)
\end{multline*}
and thus
\begin{multline*}
 \frac{1}{|y|}\iint_{\t^2}\int_{-|y|}^{|y|}|u_E(t,x+y)-u_E(t,x)|^3\d x_3\d x_2\d x_1\le C\frac{1}{|y|}\iint_{\t^2}\int_{-|y|}^{|y|}|w(t,2|y|)|^3\d x_3\d x_2\d x_1\\ \le C \frac{1}{|y|} |\t^2||y||w(t,2|y|)|^3 \to 0
\end{multline*}
as $|y|\to 0$, which is what we required.
\end{proof}


 We note that the continuity and boundary assumptions required in the theorem  could be combined into
 $L^3(0,T; C( \t^2 \times [0,\delta))$ for some $\delta>0$, or $L^3(0,T; C^\alpha( \t^2 \times [0,\delta))$ for some $\alpha>0$.
 In fact all the conditions for this theorem are satisfied by a weak solution $u$ that satisfies
 \begin{equation*}
  |u(x,t)-u(y,t)|\le Cf(x_3)|x-y|^{\alpha}
 \end{equation*}
for $\alpha >\tfrac{1}{3}$ and $f\in L^3(0,\infty)$.


\section{Conclusion}

Assuming the simple integral condition
\begin{equation*}
\lim_{|y|\to 0} \frac{1}{|y|}\int^{t_2}_{t_1}\iint_{\t^2}\int^\infty_{|y|}|u(t,x+y)-u(t,x)|^3\d x_3\d x_1\d x_2\d t=0,
\end{equation*}
which is similar to the weakest condition known on $\R^3$ or $\t^3$, and appropriate continuity at the boundary we have proved energy conservation of the incompressible Euler equations with a flat boundary of finite area. Further, our methods do not depend on the dimension so analogues  hold in $\t^{d-1}\times \R_+ $ for $d\ge 2$.

\section*{Acknowledgements}

JLR is currently supported by the European Research COuncil, grant no.\ 616797. JWDS is supported by EPSRC as part of the MASDOC DTC at the University of Warwick, Grant No. EP/HO23364/1.

\end{document}